\nonstopmode \numberwithin{equation}{section}
\newtheorem{theorem}{Theorem}[section]
\newtheorem{corollary}{Corollary}[section]
\newtheorem{proposition}{Proposition}[section]
\newtheorem{lemma}{Lemma}[section]
\newtheorem{problem}{Problem}[section]
\theoremstyle{remark}
\theoremstyle{definition}
\theoremstyle{plain}
\numberwithin{equation}{section}
\numberwithin{theorem}{section}
\newenvironment{customthm}[1]
  {\innercustomthm}
  {\endinnercustomthm}
\begin{document}

\title{An application of the Schur algorithm to variability regions of certain analytic functions}

\author{Md Firoz Ali}
\address{Md Firoz Ali,
Department of Mathematics,
National Institute of Technology Durgapur,
 Durgapur - 713209,
West Bengal,
India.}
\email{ali.firoz89@gmail.com}

\author{Vasudevarao Allu}
\address{Vasudevarao Allu,
School of Basic Science,
Indian Institute of Technology Bhubaneswar,
Bhubaneswar-752 050, Odisha, India.}
\email{avrao@iitbbs.ac.in}

\author{Hiroshi Yanagihara}
\address{Hiroshi Yanagihara,
    Department of Applied Science,
	Faculty of Engineering,
	Yamaguchi University,
	Tokiwadai, Ube 755,
	Japan}
\email{hiroshi@yamaguchi-u.ac.jp}

\subjclass[2010]{Primary 30C45, 30C75}
\keywords{Analytic functions, univalent functions, convex functions, starlike functions, Schur algorithm, variability region, Banach space, norm.}


\begin{abstract}
Let $\Omega $ be a convex domain in the complex plane ${\mathbb C}$ with $\Omega  \not= {\mathbb C}$, and $P$ be a conformal map of the unit disk ${\mathbb D}$ onto $\Omega$. Let ${\mathcal F}_\Omega$ be the class of analytic functions $g$ in ${\mathbb D}$ with $g({\mathbb D}) \subset \Omega$, and $H_1^\infty ({\mathbb D})$ be the closed unit ball of the Banach space $H^\infty ({\mathbb D})$ of bounded analytic functions $\omega$ in ${\mathbb D}$, with  norm $\| \omega \|_\infty = \sup_{z \in {\mathbb D}} |\omega (z)|$. Let ${\mathcal C}(n) = \{ (c_0,c_1 , \ldots , c_n ) \in {\mathbb C}^{n+1}: \text{there exists} \; \omega \in H_1^\infty ({\mathbb D}) \;  \text{satisfying} \; \omega (z) = c_0+c_1z + \cdots + c_n z^n + \cdots$ for ${z\in \mathbb D}\}$. For each fixed $z_0 \in {\mathbb D}$, $j=-1,0,1,2, \ldots$ and $c  = (c_0, c_1 , \ldots , c_n) \in {\mathcal C}(n)$, we use the Schur algorithm to determine the region of variability $V_\Omega^j (z_0, c ) = \{ \int_0^{z_0} z^{j}(g(z)-g(0))\, d z : g \in {\mathcal F}_\Omega \; \text{with} \; (P^{-1} \circ g) (z) = c_0 +c_1z + \cdots + c_n z^n + \cdots \}$. We  also show that for $z_0 \in {\mathbb D} \backslash \{ 0 \}$ and $c  \in \textrm{Int} \, {\mathcal C}(n) $,  $V_\Omega^j (z_0, c )$ is a convex closed Jordan domain, which we  determine by giving a parametric representation of the boundary curve $\partial V_\Omega^j (z_0, c )$.
\end{abstract}

\thanks{}

\maketitle
\pagestyle{myheadings}
\markboth{Md Firoz Ali, Vasudevarao Allu and Hiroshi Yanagihara}{Variability regions of analytic functions}


\section{Introduction}
Let ${\mathbb C}$ be the complex plane.
For $c \in {\mathbb C}$ and $r > 0$, let
${\mathbb D}(c,r) = \{ z \in {\mathbb C} : |z-c| < r \}$,
and $\overline{\mathbb D}(c,r) = \{ z \in {\mathbb C} : |z-c| \leq r \}$.
In particular, we denote the unit disk by ${\mathbb D} := {\mathbb D}(0,1)$.
Let ${\mathcal A}({\mathbb D})$ be the class of analytic functions
in the unit disk ${\mathbb D}$  with the topology of uniform convergence
on every compact subset of ${\mathbb D}$.
Let $H^\infty ({\mathbb D})$ be the Banach space of
analytic functions $f$ in ${\mathbb D}$ with  norm
$\| f \|_\infty = \sup_{z \in {\mathbb D}} |f(z)|$,
and $H_1^\infty ({\mathbb D})$ be the closed unit ball of $H^\infty ({\mathbb D})$, i.e.,
$H_1^\infty ({\mathbb D}) = \{ \omega \in H^\infty ({\mathbb D}) : \| \omega \|_\infty \leq 1 \}$.
Denote by $\mathcal{S}$, the class of univalent (i.e., one-to-one) functions
$f$ in ${\mathcal A}( {\mathbb D})$, normalized so that $f(0) = f'(0)-1 = 0$.
The subclasses $\mathcal{S}^*$ of starlike, and  ${\mathcal CV}$ of convex functions are respectively  defined by
$$
   \mathcal{S}^* = \{ f \in \mathcal{S} : \; f({\mathbb D})\; \mbox{is starlike with respect to}\; 0 \},
$$
and
$$
   {\mathcal CV} = \{ f \in \mathcal{S} : \; f({\mathbb D})\; \mbox{is convex} \}.
$$
Then ${\mathcal CV} \subset \mathcal{S}^* \subset \mathcal{S}$.
For $f \in {\mathcal A}({\mathbb D})$ with $f(0)=f'(0)-1=0$,
$f \in \mathcal{S}^*$ if, and only if, ${\rm Re} \, ( zf'(z)/f(z) ) > 0$ in ${\mathbb D}$.
Similarly for $f \in {\mathcal A}({\mathbb D})$ with $f(0)=f'(0)-1=0$,
$f \in {\mathcal CV}$ if, and only if, ${\rm Re} \, ( zf''(z)/f'(z) +1) > 0$ in ${\mathbb D}$.
The basic properties of these classes of functions can be found for example in  \cite{Duren-book,Pommerenke-book}.

\bigskip
Let ${\mathcal F}$ be a subclass of ${\mathcal A}({\mathbb D})$
and $z_0 \in {\mathbb D}$. Then upper and lower estimates of the form
$$
 M_1 \leq |f'(z_0)| \leq M_2 ,
 \qquad m_1 \leq \mbox{\rm Arg}\, f'(z_0) \leq m_2
\quad
\mbox{for all } \; f \in {\mathcal F}
$$
are  respectively called a distortion theorem, and
a rotation theorem at $z_0$ for ${\mathcal F}$,
where $M_j$ and $m_j$ ($j=1,2$) are some non-negative constants.
Estimates such as these deal only with absolute values,
or arguments of $f'(z_0)$.
In order to study the complex value $f'(z_0)$ itself,
it is necessary to consider the variability region of
$f'(z_0)$ when $f$ ranges over the class ${\mathcal F}$,
i.e., the set $\{ f'(z_0) : f \in {\mathcal F } \}$.
For example, it is known that
$$
  \{ \log f'(z_0) : f \in {\mathcal CV} \}
  =
  \left\{ \log \frac{1}{(1-z)^2 } : |z| \leq |z_0| \right\} .
$$
For a proof, we refer to \cite[Chapter 2, Exercise 10, 11 and 13]{Duren-book}.
For other examples see \cite{MacGregor-1982,Pinchuk-1968} and the references therein.

\bigskip
Let ${\mathbb H} = \{ w \in {\mathbb C} : \text{\rm Re} \, w > 0 \}$.
For $f \in {\mathcal CV}$, the function $g$ given by $g(z)=1+ z f''(z)/f'(z)$
satisfies $g({\mathbb D}) \subset {\mathbb H}$, i.e.,
$\text{\rm Re} \, g(z) > 0$ in ${\mathbb D}$.
Applying Schwarz's lemma to $(g(z)-1)/(g(z)+1)$
we obtain $|f''(0)| \leq 2$ for $f \in {\mathcal CV}$.
Gronwall \cite{Gronwall-1920}, and independently
Finkelstein \cite{Finkelstein-1967} obtained
the sharp lower and upper estimates
for $|f'(z_0)|$, when $f \in {\mathcal CV}$ satisfies
the additional condition $f''(0) = 2 \lambda$,
where $z_0 \in {\mathbb D}$ and
$\lambda \in \overline{\mathbb D}$ are arbitrarily
preassigned. Let
$$
 \widetilde{V} (z_0, \lambda)
 = \{ \log f'(z_0) : f \in {\mathcal CV} \; \mbox{and} \;
  f''(0) = 2 \lambda \} .
$$
It is easy to see that
$\widetilde{V}(e^{-i \theta}z_0, e^{i \theta }\lambda) = \widetilde{V}(z_0, \lambda )$
for all $\theta \in {\mathbb R}$.
If $| \lambda | =1 $, then by Schwarz's lemma, for $f \in {\mathcal CV}$  the condition
$f''(0) = 2 \lambda $ forces $f(z) \equiv z/(1-\lambda z)$,
and hence $\widetilde{V} (z_0, \lambda) = \{ \log 1/(1-\lambda z_0)^2  \}$.
Thus it suffices to consider the case  $0 \leq \lambda < 1$.
In 2006, one of the present authors \cite{Yanagihra:convex}
obtained the following extension to Gronwall's \cite{Gronwall-1920} result.

\begin{customthm}{A}\label{theorem-A}
For any $z_0 \in {\mathbb D} \backslash \{ 0 \}$ and $0 \leq \lambda < 1$,
the set $\widetilde{V} (z_0, \lambda) $
is a convex closed Jordan domain surrounded by the curve
\begin{eqnarray*}
    && ( - \pi , \pi ] \ni \theta \mapsto
\\
&&
    - \left(
    1 - \frac{\lambda \cos ( \theta /2)}{\sqrt{1 - \lambda^2 \sin^2 ( \theta /2) }}
    \right)
    \log
    \left\{
    1  - \frac{e^{i \theta /2} z_0}
    {i \lambda \sin ( \theta /2) - \sqrt{1 - \lambda^2 \sin^2 ( \theta /2) } }
    \right\}
\\
    & & \; \; \;  \; -
    \left(
    1 + \frac{\lambda \cos ( \theta /2)}{\sqrt{1 - \lambda^2 \sin^2 ( \theta /2) }}
    \right)
    \log
    \left\{
    1  - \frac{e^{i \theta /2} z_0}
    {i \lambda \sin ( \theta /2) + \sqrt{1 - \lambda^2 \sin^2 ( \theta /2) } }
    \right\} .
\end{eqnarray*}
\end{customthm}

In order to prove Theorem \ref{theorem-A}, Yanagihara \cite{Yanagihra:convex} implicitly showed the following.

\begin{customthm}{B}\label{theorem-B}
For any
$z_0 \in {\mathbb D} \backslash \{ 0 \}$ and $0 \leq \lambda < 1$, the variability region
$$
\left\{
     \int_0^{z_0} \frac{g(\zeta )-g(0)}{\zeta} \, d \zeta
     : \; g \in {\mathcal A}({\mathbb D}) \; \mbox{with} \;
     g(0)=1,\; g'(0)= 2 \lambda,\; g({\mathbb D}) \subset {\mathbb H}
  \right\}
$$
is the same convex closed Jordan domain as in Theorem \ref{theorem-A}.
\end{customthm}

Note that putting $g(z) = 1 + zf''(z)/f'(z)$, Theorem \ref{theorem-A} is a direct consequence of
Theorem \ref{theorem-B}. For similar results, we refer to \cite{Ponnusamy-Vasudevarao-2007,Yanagihara:bounded}.

\bigskip
The aim of this paper is to extend
Theorem \ref{theorem-B}, and to refine Theorem \ref{theorem-A}.
Throughout  the paper, we will, unless otherwise stated, assume that $\Omega $ is a convex domain in ${\mathbb C}$ with
$\Omega \not= {\mathbb C}$, and $P$ is a conformal map of
${\mathbb D}$ onto $\Omega$.
\bigskip

Let ${\mathcal F}_\Omega$ be the
class of analytic functions $g$  in ${\mathbb D}$ with
$g( {\mathbb D}) \subset \Omega$. Then the map
${\mathcal F}_\Omega \ni g \mapsto
\omega = P^{-1} \circ g \in
H_1^\infty ({\mathbb D}) \backslash E$
is bijective, where
$E= \{ \omega \in H_1^\infty({\mathbb D}) : \omega(z) \equiv a
\; \text{with} \; |a|=1 \}$.
For $c  = (c_0,c_1, \ldots , c_n ) \in {\mathbb C}^{n+1}$, let
$$
{\mathcal F}_\Omega (c ) =
   \left\{ g \in {\mathcal F}_\Omega : \;
   (P^{-1} \circ g) (z) = c_0 + c_1z+\cdots +c_nz^n + \cdots,  {z\in \mathbb D}  \right\}.
$$
For $f \in {\mathcal A}({\mathbb D})$ with $f(0)=f'(0)-1=0$,
let $g(z) = 1+ z f''(z)/f'(z)$.
Then the coefficients of the Taylor series of $f$ up to order $n+1$
are uniquely determined by the coefficients  of $\omega = P^{-1} \circ g $
up to order $n$, and vice versa. Thus in order to extend Theorem \ref{theorem-B},
we  consider  the following problem.

\begin{problem}\label{the_problem}
Let $n \in {\mathbb N} \cup \{ 0 \}$. For $j=-1, 0, 1, \ldots $,
$z_0 \in {\mathbb D} \backslash \{ 0 \}$
and $c  = (c_0, \ldots , c_n ) \in {\mathbb C}^{n+1}$,
determine the variability region
\begin{equation*}
 V_\Omega^j (z_0, c )
 =
 \left\{
   \int_0^{z_0} \zeta^j \left( g( \zeta ) - g(0) \right) \, d \zeta
   : \; g \in {\mathcal F}_\Omega  (c )
  \right\} .
\end{equation*}
\end{problem}

We note that the coefficient body ${\mathcal C}(n)$ defined by
\begin{align*}
 {\mathcal C}(n):=
&
 \{ c=(c_0,c_1, \ldots , c_n ) \in {\mathbb C}^{n+1}: \; \mbox{there exists $\omega \in H_1^\infty ({\mathbb D})$}\\
&\quad
\mbox{and  } \; \omega (z) = c_0+c_1 z + \cdots + c_n z^n  + \cdots \; \in \; {\mathbb D}
 \},
\end{align*}
is a compact and convex subset of ${\mathbb C}^{n+1}$.
 We note that Schur \cite{Schur-1917,Schur-1986} characterized ${\mathcal C}(n)$ completely.
We also refer to \cite[Chapter I]{Foias-Brazho-book} and
\cite[Chapter 1]{Bakonyi-Constantinescu-1992}, for a detailed treatment.
For $c  = (c_0, \ldots ,c_n) \in {\mathbb C}^{n+1}$,
one can calculate the corresponding Schur parameter
$\gamma = (\gamma_0, \ldots, \gamma_k)$ of $c $,
where $0 \leq k \leq n$, and necessary and sufficient conditions
for $c  \in \textrm{Int} \, {\mathcal C}(n)$, $c  \in \partial {\mathcal C}(n)$
and $c  \not\in {\mathcal C}(n)$ can be described in terms of $\gamma$
(see Section 2).

\bigskip
The organization of this paper is as follows. In Section 2, we summarize the definitions and known facts concerning  the Schur parameter $\gamma$.
We then  state our main theorem and its variant, which gives the solution to Problem \ref{the_problem}.
In Section 3, we state  some lemmas on $p$-valent starlike and convex functions, and
 introduce the Schur polynomials associated with the Schur parameter $\gamma$. We then
 prove several lemmas on the Schur polynomials.
In Section 4, we prove our main theorem, and its variant.

\section{The Schur parameter and statement of results}
For the sake of completeness we state the Carath\'{e}odry interpolation problem and its
solution given by Schur \cite{Schur-1917,Schur-1986}, which plays an important role
 in the  proof of our main theorem.

\begin{problem}[The Carath\'{e}odory interpolation problem]\label{prob:Caratheodory}
Let $n \in {\mathbb N} \cup \{ 0 \}$. For $c=(c_0, \ldots , c_n ) \in {\mathbb C}^{n+1}$,
find necessary and sufficient conditions for the existence of $\omega \in H_1^\infty ({\mathbb D})$
such that $\omega (z)$ has a series expansion of the form
$$
 \omega (z) = c_0+ c_1 z + \cdots + c_n z^n + \cdots, \quad z \in {\mathbb D}.
$$
Furthermore, find an explicit description of all solutions.
\end{problem}

We call $c  = (c_0, \ldots , c_n )$ the Carath\'{e}odory data of length $n+1$.
For $a \in {\mathbb D}$, define $\sigma_a \in \mbox{Aut} ( {\mathbb D})$ by
$$
    \sigma_a (z) = \frac{z+a}{1+ \overline{a}z},
    \quad z \in {\mathbb D} .
$$

We first solve Problem \ref{prob:Caratheodory}, when $n=0$ and $c=(c_0)$.

\bigskip

It follows from the maximum modulus principle that  Problem \ref{prob:Caratheodory}
has no solution if $|c_0| > 1$, and  has a unique solution
$\omega (z) \equiv c_0$, if $|c_0|=1$. So suppose  that  $|c_0| <1$ and
$\omega \in H_1^\infty ({\mathbb D}) $ satisfies $\omega (0) = c_0$.
Then by the maximum modulus principle,
$$
   \omega_1 (z)
   =
   \frac{\omega (z)-c_0}{z(1-\overline{c_0}\omega (z))}
$$
belongs to $H_1^\infty ({\mathbb D})$.
This relation between $\omega$ and $\omega_1$ can be convertible.
Consequently, the set of all solutions $\omega \in H_1^\infty ({\mathbb D})$ is given by
$$
   \omega (z)
   = \sigma_{c_0}(z \omega_1 (z))
   = \frac{z \omega_1 (z) + c_0}
   {1+\overline{c_0}z \omega_1 (z)},
$$
where $\omega_1 \in H_1^\infty ({\mathbb D})$ is arbitrary.

\bigskip
The general solution to Problem \ref{prob:Caratheodory}
can now be obtained from the above consideration and
by recursively applying the following
proposition, which was implicitly proved in
\cite{Schur-1917,Schur-1986} and \cite[Chapter 1]{Foias-Brazho-book}.

\begin{proposition}\label{prop:reduction_to_C_prob}
Let $c  = (c_0, \ldots , c_n ) \in {\mathbb C}^{n+1}$
be a Carath\'{e}odory data of length $n+1$.
\begin{enumerate}[{\rm (1)}]

\item If $|c_0| > 1$, or $|c_0|=1$ with $(c_1, \ldots , c_n) \not= (0, \ldots , 0)$,
then the Carath\'{e}odory problem with  data $c$ has no solution.

\item If  $|c_0|=1$ with $(c_1, \ldots , c_n) = (0, \ldots , 0)$,
then the Carath\'{e}odory problem   data $c$ has the unique solution
$\omega (z) \equiv c_0$.

\item Assume that $|c_0| <1$. If $\omega$ is a solution to the Carath\'{e}odory problem
with  data $c$, then
$$
\omega_1(z) =\frac{\omega(z)-c_0}{z(1- \overline{c_0}\omega (z))}
$$
is a solution to the Carath\'{e}odory problem with  data $c^{(1)}$, where
$c^{(1)} = (c^{(1)}_0, \ldots , c^{(1)}_{n-1})$ is a data of length $n$ defined by
\begin{equation}\label{def:tilde_c}
    c^{(1)}_0 = \frac{c_1}{1-|c_0|^2}, \quad
    c^{(1)}_p
    = \frac{c_{p+1} +
    \overline{c_0} \sum_{\ell=1}^p c^{(1)}_{p-\ell}c_\ell}
    {1-|c_0|^2} \quad (1 \leq p \leq n-1).
\end{equation}
Conversely, if we define $c^{(1)} =  (c^{(1)}_0, \ldots , c^{(1)}_{n-1}) \in {\mathbb C}^{n}$
by \text{\rm (\ref{def:tilde_c})}, and $\omega_1$ is a solution to
the Carath\'{e}odory problem  with  data $c^{(1)}$, then
$$
\omega (z) = \sigma_{c_0} (z \omega_1(z))
 = \frac{z \omega_1 (z)+ c_0}  {1 + \overline{c_0}z \omega_1(z)}
$$
is a solution to the Carath\'{e}odory problem with  data $c=(c_0, \ldots , c_n)$.
\end{enumerate}
\end{proposition}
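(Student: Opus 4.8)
\emph{Plan.} Parts (1) and (2) are immediate consequences of the maximum modulus principle, and the substance of the proposition is the correspondence in (3), whose only non-formal ingredient is a single identity between power-series coefficients. I would first dispose of (1) and (2): if $\omega \in H_1^\infty({\mathbb D})$ solves the Carath\'eodory problem with data $c$, then $\omega(0)=c_0$, so $|c_0|>1$ is impossible; and if $|c_0|=1$ the maximum modulus principle forces $\omega \equiv c_0$, whence $c_1=\cdots=c_n=0$. This yields (2), and reading the contrapositive yields the remaining claim in (1).

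\emph{Part (3), the functional-analytic step.} Assume $|c_0|<1$. Then, as recalled in the discussion of the case $n=0$ preceding the proposition, $\sigma_{c_0}$ and $\sigma_{-c_0}=\sigma_{c_0}^{-1}$ carry $\overline{\mathbb D}$ onto $\overline{\mathbb D}$ and ${\mathbb D}$ onto ${\mathbb D}$, and $1-\overline{c_0}\,\omega(z)$ is zero-free whenever $\|\omega\|_\infty \le 1$. If $\omega$ solves the problem with data $c$, then $\sigma_{-c_0}\circ\omega \in H_1^\infty({\mathbb D})$ vanishes at $0$, so by Schwarz's lemma $\omega_1(z):=(\sigma_{-c_0}\circ\omega)(z)/z=\bigl(\omega(z)-c_0\bigr)/\bigl(z(1-\overline{c_0}\,\omega(z))\bigr)$ lies in $H_1^\infty({\mathbb D})$. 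Conversely, if $\omega_1 \in H_1^\infty({\mathbb D})$ solves the problem with data $c^{(1)}$ defined by \eqref{def:tilde_c}, then $\|z\omega_1\|_\infty \le 1$, so $\omega(z):=\sigma_{c_0}(z\omega_1(z)) \in H_1^\infty({\mathbb D})$; and because $\sigma_{c_0}\circ\sigma_{-c_0}=\mathrm{id}$, the two assignments are mutually inverse. It remains only to match Taylor coefficients.

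\emph{Part (3), the coefficient computation, and the main point.} Clearing denominators, the relation defining $\omega_1$ is equivalent to the identity $z\,\omega_1(z)-\overline{c_0}\,z\,\omega(z)\,\omega_1(z)=\omega(z)-c_0$, valid in $H^\infty({\mathbb D})$. Writing $\omega(z)=\sum_{k\ge0}c_kz^k$, $\omega_1(z)=\sum_{k\ge0}c^{(1)}_kz^k$, comparing the coefficients of $z^{p+1}$, and then splitting off the $k=0$ term of the Cauchy product, gives $(1-|c_0|^2)\,c^{(1)}_p=c_{p+1}+\overline{c_0}\sum_{\ell=1}^{p}c^{(1)}_{p-\ell}c_\ell$ for every $p\ge0$, which is exactly \eqref{def:tilde_c}. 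In the forward direction this shows the coefficients of $\omega_1$ up to index $n-1$ are the prescribed $c^{(1)}_0,\dots,c^{(1)}_{n-1}$; in the converse direction one solves the same relation for $c_{p+1}$ and argues by induction on $p$, with base case $\omega(0)=\sigma_{c_0}(0)=c_0$, to see that the coefficients of $\omega$ up to index $n$ are $c_0,\dots,c_n$. The Schur/Schwarz step itself is classical, so there is no genuine obstacle; the only thing demanding care is the index bookkeeping, namely that \eqref{def:tilde_c} reads off $c^{(1)}$ only through index $n-1$ from $c_0,\dots,c_n$ --- hence the reduced data has length $n$ --- and that the induction in the converse never invokes a coefficient of order exceeding $n$.
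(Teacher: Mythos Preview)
Your argument is correct. Note, however, that the paper does not supply its own proof of this proposition: it handles only the case $n=0$ in the paragraph preceding the statement (via the maximum modulus principle and the observation that $\omega_1(z)=(\omega(z)-c_0)/\bigl(z(1-\overline{c_0}\omega(z))\bigr)\in H_1^\infty({\mathbb D})$), and then states the general proposition with a citation to Schur and to Foias--Frazho for the proof. Your functional-analytic step is exactly the paper's $n=0$ discussion, and your coefficient computation --- clearing denominators and reading off the $z^{p+1}$ term --- is the standard verification the paper omits. So there is no discrepancy in approach; you have simply filled in what the paper leaves to the references.
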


We note that $c=(c_0, \ldots , c_n)$ and
$(c_0, c^{(1)}) = (c_0, c^{(1)}_0, \ldots , c^{(1)}_{n-1})$
are uniquely determined each other by (\ref{def:tilde_c}).

\bigskip
For a given Carath\'{e}odory data $c =(c_0,\ldots ,c_n) \in {\mathbb C}^{n+1}$,
the Schur parameter $\gamma = (\gamma_0 , \ldots , \gamma_k)$,
$k=0,1, \ldots , n$ is defined as follows.
\bigskip

First, let
$c ^{(0)} = (c_0^{(0)}, \ldots , c_n^{(0)}) = (c_0,\ldots ,c_n)$ and $\gamma_0 = c_0^{(0)}$.
If $| \gamma_0| > 1 $,  we set $k=0$ and $\gamma = ( \gamma_0 )$.
If  $|\gamma_0|=1$,  we set $k=n$, and for $p=1,2,\ldots, n$,
$$
\gamma_p =
\begin{cases}
\infty , & \mbox{if} \; \, c_p^{(0)} \not= 0 \\[4pt]
0, & \mbox{if} \; \, c_p^{(0)} =0 .
\end{cases}
$$
If $|\gamma_0 | < 1$,  we define $c^{(1)} = (c_0^{(1)}, \ldots, c_{n-1}^{(1)}) \in {\mathbb C}^n$ by (\ref{def:tilde_c}).
Now  assume that $\gamma_0, \ldots ,\gamma_{j-1}$ and $c^{(j)} = (c_0^{(j)}, \ldots, c_{n-j}^{(j)})$
are already defined and satisfy $|\gamma_0|<1, \ldots,$ $|\gamma_{j-1}|<1$
for some $j$ with  $1 \leq j \leq n-1$. Then  put $\gamma_j = c_0^{(j)}$.
If $|\gamma_j| > 1$,  put $k=j$ and $\gamma = ( \gamma_0, \ldots , \gamma_j )$.
If $|\gamma_j | =1$,   put $k=n$ and for $p=j+1, \ldots , n$,
$$
\gamma_p =
\begin{cases}
   \infty , & \mbox{if} \; \, c_{p-j}^{(j)} \not= 0 \\[4pt]
   0, & \mbox{if} \; \, c_{p-j}^{(j)} =0 .
\end{cases}
$$
If $|\gamma_j | < 1$,  we define $c ^{(j+1)} = (c_0^{(j+1)}, \ldots, c_{n-j-1}^{(j+1)}) \in {\mathbb C}^{n-j}$
as in Proposition \ref{prop:reduction_to_C_prob}, i.e.,
\begin{equation}\label{eq:c_p_j+1}
 c_0^{(j+1)} = \frac{c_1^{(j)}}{1-|\gamma_j|^2}, \quad
 c_p^{(j+1)} = \frac{c_{p+1}^{(j)}
 + \overline{\gamma_j} \sum_{\ell =1}^p c_{p-\ell}^{(j+1)} c_\ell^{(j)}}{1-|\gamma_j|^2}
    \quad (1 \leq p \leq n- j - 1) .
\end{equation}
Applying this procedure recursively, one obtains the Schur parameter
$\gamma = ( \gamma_0 , \ldots , \gamma_k )$, $k=0, \ldots , n$ of $c =(c_0, \ldots , c_n)$.

\bigskip
When $|\gamma_0|<1, \ldots , |\gamma_j | < 1$, the equations in (\ref{eq:c_p_j+1})
and $\gamma_j = c_0^{(j)}$ show that
$c^{(j)} = (c_0^{(j)}, c_1^{(j)}, \ldots , c_{n-j}^{(j)})$
and $(\gamma_j , c_0^{(j+1)}, \ldots , c_{n-j-1}^{(j+1)})$
are uniquely determined by each other. Thus when $|\gamma_0|<1, \ldots , |\gamma_n| < 1$,
$c=(c_0, \ldots , c_n )= c^{(0)}$ and $\gamma = (\gamma_0, \ldots , \gamma_n )$
are also uniquely determined by each other. For an explicit representation of
$\gamma$ in terms of $c$, we refer to the following results of Schur \cite{Schur-1917,Schur-1986}.

\bigskip

\begin{customthm}{C}[Schur \cite{Schur-1917,Schur-1986}]\label{theorem-C}
For $c=(c_0, \ldots , c_n) \in {\mathbb C}^{n+1}$,
let $\gamma = ( \gamma_0 , \ldots , \gamma_k )$ be the Schur parameter of $c$.

\begin{enumerate}[{\rm (i)}]

\item
\label{interior}
If $k=n$ and $|\gamma_0|<1, \ldots , |\gamma_n|<1$, then all solutions to the
Carath\'{e}odory problem with  data $c$ are given by
\begin{equation*}
\omega (z) =
\sigma_{\gamma_0} (z \sigma_{\gamma_1}(\cdots z \sigma_{\gamma_n}(z \omega^* (z)) \cdots )),
\end{equation*}
where $\omega^* \in H_1^\infty ({\mathbb D})$ is arbitrary.

\item
\label{boundary}
If $k=n$ and $|\gamma_0|<1, \ldots , |\gamma_{i-1}|<1$, $|\gamma_i|=1$, $\gamma_{i+1}= \cdots = \gamma_n=0$
for some $i=0, \ldots , n$, then the Carath\'{e}odory problem with  data $c$ has the unique solution
\begin{equation*}
\omega (z) =
  \sigma_{\gamma_0} (z \sigma_{\gamma_1}(\cdots z \sigma_{\gamma_{i-1}}(\gamma_i z)  \cdots )) .
\end{equation*}

\item
\label{exterior}
If the hypotheses of either {\rm (\ref{interior})} or {\rm (\ref{boundary})} does not hold,
then there is no solution to the Carath\'{e}odory problem with  data $c$.
\end{enumerate}
Furthermore, if the hypothesis of {\rm (\ref{interior})} holds and
\begin{equation}\label{eq:def_omega_k}
  \left\{
 \begin{array}{l}
 \omega_0(z) = \omega(z)  \\[3pt]
 \omega_1(z) = \displaystyle \frac{\omega_0(z)-\omega_0(0)}{z(1-\overline{\omega_0(0) }\omega_0(z))}\\[3pt]
 \vdots \\[3pt]
 \omega_n (z) = \displaystyle  \frac{\omega_{n-1} (z)-\omega_{n-1}(0)}
    {z(1-\overline{\omega_{n-1}(0) }\omega_{n-1}(z))}
 \end{array}
\right.
\end{equation}
then
$$
 \gamma_p = \omega_p(0), \quad \omega_p(z) = c_0^{(p)}+c_1^{(p)}z+ \cdots + c_{n-p}^{(p)}z^{n-p} + \cdots
$$
holds for $p=0,1, \ldots , n$.
\end{customthm}

For a detailed proof of Theorem \ref{theorem-C}, we refer to \cite[Chapter 1]{Foias-Brazho-book}.
It is not difficult to see that for a given Carath\'{e}odory data $c $,
the hypotheses of (\ref{interior}), (\ref{boundary}) and (\ref{exterior})
in Theorem \ref{theorem-C} are respectively equivalent to $c  \in \text{Int} \, {\mathcal C}(n)$,
$c  \in \partial {\mathcal C}(n)$ and $c  \not\in {\mathcal C}(n)$.

\bigskip

Let $\Omega$ be a convex domain with $\Omega \not= {\mathbb C}$, and $P$ be a conformal map of ${\mathbb D}$ onto $\Omega$. When $c \not\in {\mathcal C}(n)$ or $c \in \partial {\mathcal C}(n)$, Theorem \ref{theorem-C}  gives the following simple solution to Problem \ref{the_problem}.

\bigskip

\begin{theorem}
Let  $c  =(c_0, \ldots , c_n) \in {\mathbb C}^{n+1}$ be a Carath\'{e}odory data.
If $c  \not\in {\mathcal C}(n)$ i.e., the hypothesis of {\rm (\ref{exterior})} in Theorem \ref{theorem-C} holds,
then $V_\Omega^j (z_0,c ) = \emptyset$. If $c  \in \partial {\mathcal C}(n)$ i.e.,
the hypothesis of {\rm (\ref{boundary})} in Theorem \ref{theorem-C} holds,
then $V_\Omega^j (z_0,c )$ reduces to a set consisting of a single point $w_0$,
where
$$
w_0 = \int_0^{z_0} \zeta^j
 \{ P( \sigma_{\gamma_0} ( \zeta \sigma_{\gamma_1}(\cdots \zeta \sigma_{\gamma_{i-1}} (\gamma_i \zeta  ) \cdots )))- P(c_0) \} \, d \zeta,
$$
and $\gamma =( \gamma_0, \ldots , \gamma_i, 0, \ldots , 0 )$ is  the Schur parameter of $c$.
\end{theorem}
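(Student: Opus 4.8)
The plan is to reduce the statement directly to Theorem~\ref{theorem-C}, since the two cases under consideration are precisely the ones where the Carath\'{e}odory problem has either no solution or a unique solution. First I would address the case $c \not\in {\mathcal C}(n)$. By the remark following Theorem~\ref{theorem-C}, this is equivalent to the hypothesis of (\ref{exterior}) in Theorem~\ref{theorem-C}, so there is no $\omega \in H_1^\infty({\mathbb D})$ with $\omega(z) = c_0 + c_1 z + \cdots + c_n z^n + \cdots$. Recalling that $g \mapsto P^{-1}\circ g$ is a bijection from ${\mathcal F}_\Omega$ onto $H_1^\infty({\mathbb D}) \setminus E$, this means ${\mathcal F}_\Omega(c) = \emptyset$, and hence $V_\Omega^j(z_0,c) = \emptyset$ by definition.

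Next I would treat the case $c \in \partial {\mathcal C}(n)$, equivalently the hypothesis of (\ref{boundary}) in Theorem~\ref{theorem-C}: there are $i \in \{0,\ldots,n\}$ with $|\gamma_0| < 1, \ldots, |\gamma_{i-1}| < 1$, $|\gamma_i| = 1$ and $\gamma_{i+1} = \cdots = \gamma_n = 0$, and the Carath\'{e}odory problem with data $c$ has the unique solution
$$
\omega(z) = \sigma_{\gamma_0}\bigl(z\sigma_{\gamma_1}(\cdots z\sigma_{\gamma_{i-1}}(\gamma_i z)\cdots)\bigr).
$$
This $\omega$ is not constant of modulus one (unless $i = 0$, a case one checks separately but where the conclusion is even more immediate), so $\omega \in H_1^\infty({\mathbb D}) \setminus E$ and $g := P \circ \omega$ is the unique element of ${\mathcal F}_\Omega(c)$. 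Since $g(0) = P(\omega(0)) = P(c_0)$, the set $V_\Omega^j(z_0,c)$ consists of the single point
$$
w_0 = \int_0^{z_0} \zeta^j\bigl(g(\zeta) - g(0)\bigr)\,d\zeta = \int_0^{z_0} \zeta^j\bigl\{P(\sigma_{\gamma_0}(\zeta\sigma_{\gamma_1}(\cdots \zeta\sigma_{\gamma_{i-1}}(\gamma_i\zeta)\cdots))) - P(c_0)\bigr\}\,d\zeta,
$$
as claimed.

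The only subtlety is the edge case $i = 0$: then $|\gamma_0| = |c_0| = 1$, the unique solution is the constant $\omega \equiv c_0 \in E$, so $P \circ \omega$ is constant and does not lie in the range of $g \mapsto P^{-1}\circ g$; one must verify whether ${\mathcal F}_\Omega(c)$ is then empty or whether the convention in the problem statement still assigns it a single-point value consistent with the displayed formula (which at $i=0$ reads $w_0 = \int_0^{z_0}\zeta^j\{P(\gamma_0 \zeta) - P(c_0)\}\,d\zeta$ — note this requires interpreting the empty composition appropriately). I expect this bookkeeping about the excluded set $E$ and the degenerate composition to be the main (and only) obstacle; the rest is an immediate transcription of Theorem~\ref{theorem-C} through the bijection $g \leftrightarrow P^{-1}\circ g$.
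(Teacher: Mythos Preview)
Your approach is correct and is exactly what the paper does: the paper gives no separate proof of this theorem at all, merely prefacing it with the remark that ``Theorem~\ref{theorem-C} gives the following simple solution to Problem~\ref{the_problem}.'' Your proposal spells out precisely this reduction through the bijection $g \leftrightarrow P^{-1}\circ g$.

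Your observation about the edge case $i=0$ is sharp and worth noting: when $|\gamma_0|=|c_0|=1$ the unique Schur solution is the unimodular constant $\omega\equiv c_0\in E$, and since $P^{-1}\circ g$ maps ${\mathbb D}$ into the open disk for every $g\in{\mathcal F}_\Omega$, one actually has ${\mathcal F}_\Omega(c)=\emptyset$ rather than a singleton. This is a minor imprecision in the paper's statement (which it does not address), not a defect in your argument; for $i\geq 1$ your reasoning goes through cleanly since then $\omega(0)=\gamma_0\in{\mathbb D}$ and $\omega\notin E$.
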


Now we introduce a family of functions, which are extremal for Problem \ref{the_problem}
in the case $c  \in \text{Int} \, {\mathcal C}(n)$.
\bigskip

For $\varepsilon \in \overline{\mathbb D}$
and  Schur parameter $\gamma =( \gamma_0,\ldots , \gamma_n)$ of $c  \in \text{Int} \, {\mathcal C}(n)$, let
\begin{align}
\omega_{\gamma , \varepsilon }(z)
    =&  \sigma_{\gamma_0} ( z \sigma_{\gamma_1} ( \cdots z \sigma_{\gamma_{n}} ( \varepsilon z) \cdots )), \quad z \in {\mathbb D} ,
\label{def:extremal_omega}
\\
Q_{\gamma , j} (z, \varepsilon )
    =& \int_0^z \zeta^j \{ P(\omega_{\gamma,\varepsilon} (\zeta )) - P(c_0) \}\, d \zeta , \quad z \in {\mathbb D} \; \text{and} \;
    \varepsilon \in \overline{\mathbb D}.
\label{def:extremal_Q}
\end{align}
Then $\omega_{\gamma , \varepsilon } \in H_1^\infty ({\mathbb D})$ is a solution to the Carath\'{e}odory problem with the data $c$, i.e.,
$$
\omega_{\gamma , \varepsilon }(z) = c_0+c_1z+\cdots + c_nz^n + \cdots .
$$
In particular, we have $\omega_{\gamma , \varepsilon }(0)=c_0$. We note that for each fixed $\varepsilon \in \overline{\mathbb D}$, $\omega_{\gamma,\varepsilon} (z)$ and $Q_{\gamma , j} (z, \varepsilon )$ are analytic functions of $z \in {\mathbb D}$, and for each fixed $z \in {\mathbb D}$, $\omega_{\gamma,\varepsilon} (z)$ and $Q_{\gamma , j} (z, \varepsilon )$ are analytic functions of $\varepsilon \in \overline{\mathbb D}$.
When $\varepsilon \in \partial {\mathbb D}$, $\omega_{\gamma,\varepsilon} (z)$ is a finite Blaschke product of $z$. Indeed, it follows from (\ref{def:extremal_omega}) that $\omega_{\gamma , \varepsilon } (z)$ is a rational function of $z$, which is analytic on $\overline{\mathbb D}$. If $|\varepsilon | =1$, then again by (\ref{def:extremal_omega}) it is easy to see that $\omega_{\gamma , \varepsilon } $ maps $\partial {\mathbb D}$ into $\partial {\mathbb D}$. Thus $\omega_{\gamma , \varepsilon }$ is a finite Blaschke product of $z$.
\bigskip

We next state the following,  which is the main result of this paper.

\begin{theorem}\label{thm:Main_theorem}
Let $n \in {\mathbb N} \cup \{ 0 \}$, $j \in \{-1,0, 1,2 , \ldots \}$,
$c  = (c_0, \ldots , c_n ) \in \text{\rm Int} \, {\mathcal C}(n)$
and $\gamma =(\gamma_0, \ldots , \gamma_n )$ be the Schur parameter of $c$.
Then for each fixed $z_0 \in {\mathbb D} \backslash \{0 \}$, $Q_{\gamma , j}(z_0, \varepsilon )$
is a convex univalent function of $\varepsilon \in \overline{\mathbb D}$ and
$$
V_\Omega^j (z_0,c) =   Q_{\gamma , j}(z_0, \overline{\mathbb D} )
  := \{ Q_{\gamma , j}(z_0, \varepsilon ) : \varepsilon \in \overline{\mathbb D} \} .
$$
Furthermore
$$
\int_0^{z_0} \zeta^j \{ g(\zeta ) - g(0) \} \, d \zeta = Q_{\gamma , j}(z_0, \varepsilon )
$$
for some $g \in {\mathcal F}_\Omega (c )$ and $\varepsilon \in \partial {\mathbb D}$
if, and only if, $g (z) \equiv P( \omega_{\gamma , \varepsilon } (z ))$.
\end{theorem}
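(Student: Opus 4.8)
The plan is to realize $V_\Omega^j(z_0,c)$ as the image of the closed unit ball $H_1^\infty(\mathbb D)$ under a weak-$*$ continuous affine-type functional, and then to identify its extreme points. First I would use Theorem~\ref{theorem-C}(\ref{interior}): since $c\in\mathrm{Int}\,\mathcal C(n)$ with Schur parameter $\gamma=(\gamma_0,\ldots,\gamma_n)$, every $g\in\mathcal F_\Omega(c)$ can be written as $g(z)=P(\sigma_{\gamma_0}(z\sigma_{\gamma_1}(\cdots z\sigma_{\gamma_n}(z\omega^*(z))\cdots)))$ for a unique $\omega^*\in H_1^\infty(\mathbb D)$, and conversely every such $\omega^*$ gives an element of $\mathcal F_\Omega(c)$. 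Thus $V_\Omega^j(z_0,c)=\{\Phi(\omega^*):\omega^*\in H_1^\infty(\mathbb D)\}$, where $\Phi(\omega^*)=\int_0^{z_0}\zeta^j\{g_{\omega^*}(\zeta)-P(c_0)\}\,d\zeta$. Using the conformal map $P$ and expanding, $\Phi$ depends on $\omega^*$ through an integral against a fixed kernel; the key structural fact (to be proved in Section~3 via the lemmas on Schur polynomials and $p$-valent starlike/convex functions) is that $\varepsilon\mapsto Q_{\gamma,j}(z_0,\varepsilon)$ is convex univalent on $\overline{\mathbb D}$. Granting that, $Q_{\gamma,j}(z_0,\overline{\mathbb D})$ is a convex closed Jordan domain whose boundary is exactly $\{Q_{\gamma,j}(z_0,\varepsilon):\varepsilon\in\partial\mathbb D\}$, attained only at the corresponding $\varepsilon$.

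For the set equality $V_\Omega^j(z_0,c)=Q_{\gamma,j}(z_0,\overline{\mathbb D})$: the inclusion $\supseteq$ is immediate since each $\omega_{\gamma,\varepsilon}$ lies in $\mathcal F_\Omega(c)$ after composing with $P$. For $\subseteq$, I would argue that for fixed $z_0$ the functional $\omega^*\mapsto\Phi(\omega^*)$ is continuous in the topology of locally uniform convergence, that $H_1^\infty(\mathbb D)$ is compact in that topology, and hence $V_\Omega^j(z_0,c)$ is compact; more importantly, linearizing in the "last slot" $\omega^*$ (the dependence of $g_{\omega^*}$ on $\omega^*$ is, after the chain of Möbius maps $\sigma_{\gamma_k}$, a fractional-linear-in-$\omega^*(\zeta)$ expression composed with $P$), one shows $\Phi$ maps $H_1^\infty(\mathbb D)$ onto the same set as its restriction to the constant functions $\omega^*\equiv\varepsilon$, $\varepsilon\in\overline{\mathbb D}$. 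This reduction to constants is the technical heart and is presumably where the Schur-polynomial lemmas of Section~3 do the work: one needs that replacing $\omega^*$ by the constant $\varepsilon=\omega^*(0)$ does not shrink the attainable values, and that the boundary is swept out precisely by $|\varepsilon|=1$. I expect the main obstacle to be this passage from a general $\omega^*\in H_1^\infty(\mathbb D)$ to an extremal constant $\varepsilon$: it requires showing that $Q_{\gamma,j}(z_0,\cdot)$ is not merely univalent but that its image contains $\Phi(H_1^\infty(\mathbb D))$, which typically follows from a subordination argument—$z^j(g_{\omega^*}(z)-P(c_0))$ being subordinate, in an appropriate integrated sense, to the extremal case—together with the fact that convexity of $Q_{\gamma,j}(z_0,\cdot)$ forces the full disk image to coincide with the convex hull of the boundary values.

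Finally, for the rigidity statement (equality on $\partial\mathbb D$ forces $g\equiv P(\omega_{\gamma,\varepsilon})$): once univalence of $\varepsilon\mapsto Q_{\gamma,j}(z_0,\varepsilon)$ on $\overline{\mathbb D}$ is established, the value $Q_{\gamma,j}(z_0,\varepsilon)$ with $|\varepsilon|=1$ is a boundary point of the convex body $V_\Omega^j(z_0,c)$, attained by a unique $\varepsilon$. Suppose $\int_0^{z_0}\zeta^j\{g(\zeta)-g(0)\}\,d\zeta=Q_{\gamma,j}(z_0,\varepsilon)$ for some $g\in\mathcal F_\Omega(c)$, with corresponding $\omega^*\in H_1^\infty(\mathbb D)$ via Theorem~\ref{theorem-C}(\ref{interior}). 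Being a boundary point, it is an extreme point of the compact convex set $V_\Omega^j(z_0,c)$; by the reduction above the fiber over an extreme boundary point consists only of the constant function $\omega^*\equiv\varepsilon$. Here the strict subordination/Schwarz-lemma-type argument from Section~3 ensures no non-constant $\omega^*$ can attain a boundary value—if $\int_0^{z_0}\zeta^j(P\circ g_{\omega^*})$ equals the extremal integral and $\omega^*$ is non-constant, one derives a contradiction with the strict convexity of $P(\mathbb D)=\Omega$ along the relevant boundary arc. Hence $\omega^*\equiv\varepsilon$ and therefore $g(z)\equiv P(\omega_{\gamma,\varepsilon}(z))$, completing the proof.
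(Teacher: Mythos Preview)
Your proposal has a genuine gap at the step you yourself flag as ``the technical heart'': the passage from an arbitrary $\omega^*\in H_1^\infty(\mathbb D)$ to a constant $\omega^*\equiv\varepsilon$. The functional $\Phi(\omega^*)=\int_0^{z_0}\zeta^j\{P(\ldots\omega^*\ldots)-P(c_0)\}\,d\zeta$ is \emph{not} affine in $\omega^*$ (both the M\"obius chain and the conformal map $P$ are nonlinear), so no extreme-point or Krein--Milman argument applies; and the extreme points of $H_1^\infty(\mathbb D)$ are inner functions, not constants. Likewise, there is no subordination principle for the \emph{integrated} quantity $\int_0^{z_0}\zeta^j(g(\zeta)-g(0))\,d\zeta$, so ``replacing $\omega^*$ by $\omega^*(0)$'' has no mechanism behind it. You have also inverted the logical order: you assume convex univalence of $\varepsilon\mapsto Q_{\gamma,j}(z_0,\varepsilon)$ as input, whereas in the paper it is the \emph{output}, deduced only at the very end via Darboux's theorem.

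The paper's route is different in kind. First one shows directly that $V_\Omega^j(z_0,c)$ is compact and convex (convexity of $\mathcal F_\Omega(c)$, which uses convexity of $\Omega$, plus linearity of $g\mapsto\int_0^{z_0}\zeta^j(g-g(0))\,d\zeta$) and has nonempty interior (Proposition~\ref{prop:interior_point}), hence is a closed Jordan domain. The crucial step you are missing is Proposition~\ref{prop:boundary_uniqueness}: for each $|\varepsilon|=1$ one constructs a \emph{supporting half-plane} for $V_\Omega^j(z_0,c)$ at $Q_{\gamma,j}(z_0,\varepsilon)$. This comes from a \emph{pointwise} geometric inequality---for every $z$ the value $g(z)$ lies in the convex set $P(\overline{\mathbb D}(\rho(z),r(z)))$, and $P(\omega_{\gamma,\varepsilon}(z))$ lies on its boundary, giving a half-plane inequality~(\ref{ineq:fundamental})---which is then integrated along a carefully chosen path $\Gamma$ on which $h'(z(t))z'(t)$ is constant (the existence of $\Gamma$ uses the $p$-valent starlikeness established via Lemmas~\ref{lemma:p-valent_starlikeness}--\ref{lemma:p-valent_convexity}). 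Only after this does one argue that $\varepsilon\mapsto Q_{\gamma,j}(z_0,\varepsilon)$ is injective on $\partial\mathbb D$, hence parametrizes all of $\partial V_\Omega^j(z_0,c)$, and then Darboux yields convex univalence. The uniqueness clause also comes from the equality case in the pointwise inequality~(\ref{ineq:fundamental}), not from an abstract extreme-point argument.
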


%
%
%
%
%
%

\section{Preliminaries}
First we note the following (which is elementary).

\begin{lemma}\label{lemma:asymptotic_series}
Let  $n \in {\mathbb N} \cup \{ 0 \}$ and $g_0$, $g_1 \in {\mathcal A}({\mathbb D})$ satisfying
$g_1(z) -g_0(z) = {\mathcal{O}}(z^{n+1})$ as $z \rightarrow 0$. Then for any analytic function $\varphi$ defined in
a neighborhood of $g_0(0)=g_1(0)$, $\varphi (g_0(z)) - \varphi (g_1(z)) = { \mathcal{O}}(z^{n+1})$ as $z \rightarrow 0$.
\end{lemma}

We next give some  simple lemmas concerning multivalent starlike and convex functions.
For a positive integer $p$, we write $(\mathcal{S}^*)^p:=\{ f_0^p:f_0\in\mathcal{S}^* \}$.

\begin{lemma}\label{lemma:p-valent_starlikeness}
Let $f \in {\mathcal A}( {\mathbb D})$
with $f(z) = z^p  + \cdots $ for ${z\in \mathbb D}$. Then $f \in (\mathcal{S}^*)^p$, i.e., there exists
$f_0 \in \mathcal{S}^*$ with $f=f_0^p$ if, and only if,
\begin{equation}\label{ineq:analytic_condition_of_p-valent_starlikeness}
{\rm Re} \, \left\{  \frac{zf'(z)}{f(z)} \right\} > 0, \quad  \; {z\in \mathbb D} .
\end{equation}
\end{lemma}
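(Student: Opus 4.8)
The plan is to reduce the claim about the $p$-valent function $f$ to the classical analytic characterization of ordinary starlike functions via the $p$-th root transform. First I would treat the ``only if'' direction, which is immediate: if $f = f_0^p$ with $f_0 \in \mathcal{S}^*$, then a direct logarithmic differentiation gives $zf'(z)/f(z) = p\, zf_0'(z)/f_0(z)$, and since $\operatorname{Re}(zf_0'(z)/f_0(z)) > 0$ in $\mathbb{D}$ by the standard characterization of $\mathcal{S}^*$, multiplying by the positive constant $p$ preserves positivity of the real part, giving \eqref{ineq:analytic_condition_of_p-valent_starlikeness}.

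For the ``if'' direction, the main work is to produce a well-defined single-valued $p$-th root $f_0$ of $f$ on $\mathbb{D}$ and then to verify it lies in $\mathcal{S}^*$. Since $f(z) = z^p + \cdots$, we may write $f(z) = z^p h(z)$ where $h$ is analytic on $\mathbb{D}$ with $h(0) = 1$. The hypothesis \eqref{ineq:analytic_condition_of_p-valent_starlikeness} rearranges to $\operatorname{Re}\bigl( p + zh'(z)/h(z)\bigr) > 0$; in particular the real part of $zh'(z)/h(z)$ exceeds $-p$, but more importantly \eqref{ineq:analytic_condition_of_p-valent_starlikeness} forces $f$ to be nonvanishing on $\mathbb{D}\setminus\{0\}$, hence $h$ is zero-free on $\mathbb{D}$. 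Therefore $h$ has a single-valued analytic $p$-th root $h^{1/p}$ on the simply connected domain $\mathbb{D}$, normalized by $h^{1/p}(0) = 1$, and I set $f_0(z) = z\, h(z)^{1/p}$. Then $f_0 \in \mathcal{A}(\mathbb{D})$, $f_0(0) = 0$, $f_0'(0) = 1$, and $f_0^p = z^p h = f$ by construction. Finally, computing $zf_0'(z)/f_0(z) = \frac{1}{p}\, zf'(z)/f(z)$ (again by logarithmic differentiation, valid since $f_0$ is zero-free off the origin), the hypothesis \eqref{ineq:analytic_condition_of_p-valent_starlikeness} shows $\operatorname{Re}(zf_0'(z)/f_0(z)) > 0$ on $\mathbb{D}$. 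Together with the normalization, one still needs that $f_0$ is univalent: this follows from the fact that a function in $\mathcal{A}(\mathbb{D})$ with $f_0(0) = f_0'(0) - 1 = 0$ satisfying $\operatorname{Re}(zf_0'(z)/f_0(z)) > 0$ is automatically in $\mathcal{S}^*$ — precisely the characterization recalled in the Introduction — so $f_0 \in \mathcal{S}^*$ and $f = f_0^p \in (\mathcal{S}^*)^p$.

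The one point that requires a little care, and which I expect to be the main (though minor) obstacle, is the existence of the single-valued $p$-th root: one must argue that \eqref{ineq:analytic_condition_of_p-valent_starlikeness} genuinely prevents $f$ from having zeros in $\mathbb{D}\setminus\{0\}$. This is clear intuitively — at a zero $z_1 \neq 0$ of $f$ the quantity $zf'(z)/f(z)$ has a pole with positive residue equal to the order of the zero, contradicting boundedness of the real part near $z_1$, or more cleanly: if $f$ had a zero of order $m$ at $z_1 \neq 0$ then $zf'(z)/f(z) \sim m z_1/(z - z_1)$, whose real part is unbounded below as $z \to z_1$, contradicting \eqref{ineq:analytic_condition_of_p-valent_starlikeness}. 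Once zero-freeness on $\mathbb{D}\setminus\{0\}$ is in hand, the factorization $f = z^p h$ with $h$ zero-free on all of $\mathbb{D}$, and hence the $p$-th root, is routine, and the rest of the argument is the elementary logarithmic-derivative computation above.
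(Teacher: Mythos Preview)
Your proposal is correct and follows essentially the same approach as the paper: both directions rely on the logarithmic-derivative identity $zf'(z)/f(z) = p\, zf_0'(z)/f_0(z)$, together with the observation that \eqref{ineq:analytic_condition_of_p-valent_starlikeness} forces $f$ to be zero-free on $\mathbb{D}\setminus\{0\}$ so that a normalized $p$-th root $f_0$ exists, after which the $p=1$ characterization of $\mathcal{S}^*$ applies. Your write-up simply spells out the factorization $f = z^p h$ and the pole argument for zero-freeness in more detail than the paper does.
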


\begin{proof}
When $p=1$,  the lemma is well known (see \cite[Theorem 2.10]{Duren-book}).
For general $p$, if there exists $f_0 \in \mathcal{S}^*$ with $f=f_0^p$, then by the identity $zf'(z)/f(z)=pz f_0'(z)/f_0(z)$
we have $\text{\rm Re} \, \{ zf'(z)/f(z) \} > 0$.

\bigskip
Inequality (\ref{ineq:analytic_condition_of_p-valent_starlikeness}) ensures that $f$  has no zeros in ${\mathbb D} \backslash \{ 0 \}$. Thus there exists $f_0 \in {\mathcal A}({\mathbb D})$ with $f_0^p = f$ and $f_0'(0)=1$. Again from the identity $zf'(z)/f(z)=pz f_0'(z)/f_0(z)$ it follows that $\text{\rm Re} \, \{ z f_0'(z)/f_0(z) \} > 0$ in ${\mathbb D}$. Thus $f_0 \in \mathcal{S}^*$.
\end{proof}

\begin{lemma}\label{lemma:p-valent_convexity}
Let $f \in {\mathcal A}( {\mathbb D})$
with $f(z) = z^p  + \cdots $ for ${z\in \mathbb D}$. If $f$ satisfies
\begin{equation*}
  {\rm Re} \, \left\{1+ \frac{zf''(z)}{f'(z)} \right\} > 0, \quad  \; {z\in \mathbb D},
\end{equation*}
then  there exists $f_0 \in \mathcal{S}^*$ with $f=f_0^p$.
\end{lemma}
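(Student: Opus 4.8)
The plan is to reduce this to Lemma \ref{lemma:p-valent_starlikeness} by showing that the hypothesis ${\rm Re}\,\{1 + zf''(z)/f'(z)\} > 0$ implies ${\rm Re}\,\{zf'(z)/f(z)\} > 0$, after which the existence of $f_0 \in \mathcal{S}^*$ with $f = f_0^p$ is immediate. First I would observe that the convexity-type condition forces $f'$ to be zero-free in ${\mathbb D}$: writing $f(z) = z^p + \cdots$, we have $f'(z) = pz^{p-1} + \cdots$, so $f'(z)/z^{p-1}$ is analytic and nonvanishing at the origin, and the condition ${\rm Re}\,\{1 + zf''(z)/f'(z)\} > 0$ says precisely that $z \mapsto z f'(z)$ is a $p$-fold-like map with positive real part of its logarithmic derivative; more directly, $(p-1) + z f''(z)/f'(z) \cdot$ adjustments show $h(z) := f'(z)/(p z^{p-1})$ satisfies $1 + z h'(z)/h(z) + (p-1) = 1 + zf''(z)/f'(z)$, hence ${\rm Re}\,\{1 + z h'(z)/h(z)\} = {\rm Re}\,\{1+zf''(z)/f'(z)\} - (p-1)$, which is not obviously positive — so I must be more careful and instead argue directly with the integral/Herglotz representation rather than passing through $h$.

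The cleaner route: set $F(z) = \int_0^z f'(\zeta)^{1/p}\,$-type constructions are awkward because of branch issues, so instead I would argue as follows. The hypothesis implies $zf'(z)$ is starlike of order... no. The genuinely clean argument is to use the known fact (a Marx–Strohhäcker type result for $p$-valent functions) that if $f(z) = z^p + \cdots$ and ${\rm Re}\,\{1 + zf''(z)/f'(z)\} > 0$ then ${\rm Re}\,\{zf'(z)/f(z)\} > 0$. To prove this from scratch I would let $g(z) = zf'(z)/(pf(z))$, which is analytic near $0$ with $g(0) = 1$, compute the differential relation $1 + zf''(z)/f'(z) = p g(z) + z g'(z)/g(z)$, and then invoke the standard subordination/differential-inequality lemma: if $p\, g(z) + zg'(z)/g(z)$ has positive real part and $g(0) = 1$, then ${\rm Re}\,g(z) > 0$ (this is a Jack/Miller–Mocanu admissibility argument — assume $g$ first touches the imaginary axis at $z_0$, write $g(z_0) = i t$, use $z_0 g'(z_0)/g(z_0) = $ real $\geq$ a suitable bound by Jack's lemma, and derive a contradiction with positivity of the real part). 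Once ${\rm Re}\,g(z) = {\rm Re}\,\{zf'(z)/(pf(z))\} > 0$, we get ${\rm Re}\,\{zf'(z)/f(z)\} > 0$, so Lemma \ref{lemma:p-valent_starlikeness} applies and yields $f_0 \in \mathcal{S}^*$ with $f = f_0^p$.

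The main obstacle is handling the multivalent factor $p$ correctly in the differential inequality: the term $p\,g(z)$ (rather than just $g(z)$) appears where the classical Marx–Strohhäcker argument has coefficient $1$, so I need the Jack's-lemma step to still go through — at a boundary touching point $z_0$ with $g(z_0) = it$, $t \in {\mathbb R}$, Jack's lemma gives $z_0 g'(z_0) = k\, g(z_0)$ with $k \geq 1$ (if $t \neq 0$) so $z_0 g'(z_0)/g(z_0) = k$ is a positive real, and then ${\rm Re}\,\{p\, g(z_0) + z_0 g'(z_0)/g(z_0)\} = 0 + k > 0$, which does \emph{not} immediately contradict the hypothesis. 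So the naive argument fails and I would instead need $g(z_0) = 0$ to be excluded separately (done, since $f$ is $p$-valent-starlike-candidate with $f(z)/z^p$ nonvanishing once we know it) and treat the case $t = \infty$, i.e. $g$ has a pole, via the zero-free-ness of $f$ on ${\mathbb D}\setminus\{0\}$ which itself must be extracted from the hypothesis first. Thus I anticipate the real work is: (i) show $f$ has no zeros in ${\mathbb D}\setminus\{0\}$ directly from ${\rm Re}\,\{1+zf''/f'\} > 0$ — e.g. by noting $f' \neq 0$ on ${\mathbb D}$ (since $1 + zf''/f'$ would blow up with the wrong sign at a zero of $f'$), then $f$ is locally univalent, and a boundary-behavior/argument-principle count shows $f(z) = z^p \cdot (\text{nonvanishing})$; and (ii) feed this into Lemma \ref{lemma:p-valent_starlikeness}'s hypothesis-checking. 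Given the paper states Lemma \ref{lemma:p-valent_convexity} right after Lemma \ref{lemma:p-valent_starlikeness}, I expect the intended proof simply verifies \eqref{ineq:analytic_condition_of_p-valent_starlikeness} and cites the preceding lemma, with the verification being exactly the Marx–Strohhäcker-for-$p$-valent computation sketched above, carried out with the correct handling of the $p\,g$ term.
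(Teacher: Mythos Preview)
Your strategic goal --- reduce to Lemma~\ref{lemma:p-valent_starlikeness} by verifying ${\rm Re}\{zf'(z)/f(z)\}>0$ --- is exactly what the paper does. But the route you sketch to that inequality is both different from the paper's and, as written, incomplete. You attempt a Jack/Miller--Mocanu argument on $g(z)=zf'(z)/(pf(z))$ via the identity $1+zf''/f'=pg+zg'/g$, and you correctly flag that the naive contradiction does not materialize. In fact your invocation of Jack's lemma is misapplied: Jack's lemma concerns points where $|g|$ attains a maximum, not where ${\rm Re}\,g$ first vanishes. The correct boundary-touching lemma (Miller--Mocanu) gives, at $g(z_0)=it$ with $t\ne 0$, that $z_0g'(z_0)$ is real and $\le -(1+t^2)/2$, hence $z_0g'(z_0)/g(z_0)$ is purely imaginary; this yields ${\rm Re}\{pg(z_0)+z_0g'(z_0)/g(z_0)\}=0$, contradicting the \emph{strict} inequality in the hypothesis --- so the argument can be repaired, but not with the conclusion you wrote down. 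You also still owe the analysis of the degenerate cases ($t=0$, i.e.\ $g(z_0)=0$, and poles of $g$), which requires knowing in advance that $f$ is zero-free on ${\mathbb D}\setminus\{0\}$.

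The paper bypasses all of this with a two-line argument using a result of Libera: if $A,B$ have leading term $z^q$ and one of them lies in $(\mathcal S^*)^q$, then ${\rm Re}\{A'/B'\}>0$ implies ${\rm Re}\{A/B\}>0$. First, setting $G(z)=p^{-1}zf'(z)$ one has $zG'/G=1+zf''/f'$, so Lemma~\ref{lemma:p-valent_starlikeness} gives $G\in(\mathcal S^*)^p$ directly. Then take $A(z)=zf'(z)$, $B(z)=f(z)$: since $A'/B'=(zf')'/f'=1+zf''/f'$ has positive real part and $p^{-1}A\in(\mathcal S^*)^p$, Libera's result yields ${\rm Re}\{zf'/f\}>0$, and Lemma~\ref{lemma:p-valent_starlikeness} finishes. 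This avoids differential-subordination technicalities entirely and never needs to argue separately that $f$ is zero-free off the origin.
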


\begin{proof}
Let $q \in {\mathbb N} \cup \{ 0 \}$, and $A$ and  $B$ be analytic functions in ${\mathbb D}$
such that $A(z)=az^q + \cdots $ and $B(z)=bz^q + \cdots $ in ${\mathbb D}$ with $a, b \not= 0 $.
Suppose that $a^{-1} A \in (\mathcal{S}^*)^q$, or $b^{-1} B \in (\mathcal{S}^*)^q$. Then Libera \cite{Libera-1965} showed that
$\textrm{Re} \, \{ A'(z)/B'(z) \} >0$ in ${\mathbb D}$ implies $\textrm{Re} \, \{ A(z)/B(z) \} >0$ in ${\mathbb D}$.
Now note that $1+zf''(z)/f'(z) = (zf'(z))'/f'(z)$. Let $g(z) = p^{-1}zf'(z)$. Since $\text{\rm Re} \, ( zg'(z)/g(z) )
= \mbox{\rm Re} ( 1+zf''(z)/f'(z) )>0$, it follows from  Lemma \ref{lemma:p-valent_starlikeness} that $g \in (\mathcal{S}^*)^p$.
Thus, with $A(z)=zf'(z)$ and $B(z)=f(z) $ Libera's result, shows that  $\text{\rm Re} \, ( zf'(z)/f(z) ) >0$ for  ${z\in \mathbb D}$, and so
the lemma now follows from Lemma \ref{lemma:p-valent_starlikeness}.
\end{proof}

\bigskip

A well known theorem due to Robertson \cite{Robertson-1936} states that each $f \in {\mathcal CV}$ can be approximated by a sequence of conformal maps of
${\mathbb D} $ onto convex polygons, uniformly on any compact subset of ${\mathbb D}$ as follows.

\bigskip

\begin{lemma}\cite{Robertson-1936}\label{lemma:approx_by_polygon_mapping}
For any function $f \in {\mathcal CV}$, $\eta > 0$ and a compact subset $E$ of ${\mathbb D}$,
there exists $h \in {\mathcal A}({\mathbb D})$ with $\sup_{z \in E} |f'(z)-h(z) | < \eta$, such that
$h(z)$ can be written in the form
$$
h(z) = \frac{1} {\prod_{i=1}^m (1-\eta_i z)^{\beta_i}},
$$
where $\eta_i \in \partial {\mathbb D}$, $0<\beta_i \leq 2$ $(i=1,\ldots , m)$, and $\sum_{i=1}^m \beta_i = 2$.
\end{lemma}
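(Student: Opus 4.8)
The plan is to use the Herglotz representation to get an explicit integral formula for $f'$ and then discretize the representing measure. Since $f \in {\mathcal CV}$, the function $p(z) = 1 + zf''(z)/f'(z)$ has positive real part on ${\mathbb D}$ with $p(0)=1$, so there is a Borel probability measure $\mu$ on $\partial {\mathbb D}$ with
$$
 1 + \frac{zf''(z)}{f'(z)} = \int_{\partial {\mathbb D}} \frac{1+xz}{1-xz}\, d\mu(x), \qquad z \in {\mathbb D}.
$$
Subtracting $1 = \int_{\partial {\mathbb D}} d\mu(x)$ and dividing by $z$, I get $f''(z)/f'(z) = \int_{\partial {\mathbb D}} \frac{2x}{1-xz}\, d\mu(x) = -2\,\frac{d}{dz} \int_{\partial {\mathbb D}} \log(1-xz)\, d\mu(x)$, using the principal branch of $\log$, which is legitimate because $1-xz \in {\mathbb D}(1,1)$ for $x \in \partial {\mathbb D}$ and $z \in {\mathbb D}$. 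Integrating from $0$ and using $f'(0)=1$ then gives
$$
 f'(z) = \exp\!\left( -2 \int_{\partial {\mathbb D}} \log(1-xz)\, d\mu(x) \right), \qquad z \in {\mathbb D}.
$$

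Next I would approximate $\mu$ in the weak-$*$ topology by finitely supported probability measures $\mu_\nu = \sum_{i=1}^{m} \lambda_i \delta_{\eta_i}$ (with $m$, $\lambda_i$, $\eta_i$ depending on $\nu$), where $\eta_i \in \partial {\mathbb D}$, $\lambda_i > 0$ and $\sum_{i=1}^{m} \lambda_i = 1$; such measures are weak-$*$ dense among probability measures on the compact metric space $\partial {\mathbb D}$. Setting
$$
 h_\nu(z) = \exp\!\left( -2 \int_{\partial {\mathbb D}} \log(1-xz)\, d\mu_\nu(x) \right) = \prod_{i=1}^{m} (1-\eta_i z)^{-\beta_i}, \qquad \beta_i := 2\lambda_i,
$$
produces a function of precisely the required form: $\beta_i > 0$; $\beta_i = 2\lambda_i \leq 2$ since $\sum_i \lambda_i = 1$ with all $\lambda_i > 0$; $\sum_i \beta_i = 2$; and $h_\nu \in {\mathcal A}({\mathbb D})$ since $1-\eta_i z \not= 0$ on ${\mathbb D}$.

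It then remains to show $h_\nu \to f'$ uniformly on the prescribed compact set $E \subset {\mathbb D}$; once that is done, choosing $\nu$ large and $h = h_\nu$ completes the argument. Here I would use that $(x,z) \mapsto \log(1-xz)$ is continuous, hence uniformly continuous and bounded, on the compact set $\partial {\mathbb D} \times E$, so that the family $\{\, x \mapsto \log(1-xz) : z \in E \,\}$ is uniformly bounded and equicontinuous on $\partial {\mathbb D}$. A standard argument (for instance, approximating $x \mapsto \log(1-xz)$ uniformly in $z \in E$ by a single trigonometric polynomial) then upgrades the pointwise weak-$*$ convergence to $\int_{\partial {\mathbb D}} \log(1-xz)\, d\mu_\nu(x) \to \int_{\partial {\mathbb D}} \log(1-xz)\, d\mu(x)$ uniformly for $z \in E$; since $\exp$ is uniformly continuous on bounded subsets of ${\mathbb C}$, this yields $\sup_{z \in E}|h_\nu(z)-f'(z)| \to 0$.

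The main obstacle is this last uniformity: passing from pointwise weak-$*$ convergence of the integrals to convergence uniform in $z \in E$. It is true, by the equicontinuity just noted together with the compactness of $E$, but it is the step that genuinely requires care rather than the formal manipulations above. An alternative, more geometric route — approximating $\Omega = f({\mathbb D})$ by inscribed convex polygons, realizing the Riemann maps onto them as Schwarz--Christoffel products $\prod (1-\eta_i z)^{-\beta_i}$ with $\sum \beta_i = 2$, and passing to the limit via the Carath\'{e}odory kernel convergence theorem — leads to the same conclusion, with the convergence of the derivatives being the analogous delicate point; the Herglotz approach above is the most economical.
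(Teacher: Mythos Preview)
The paper does not supply its own proof of this lemma: it is stated with a citation to Robertson \cite{Robertson-1936} and used as a black box. So there is no in-paper argument to compare against.

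Your proof via the Herglotz representation is correct and is, in fact, the standard modern route to this approximation. The chain
\[
1+\frac{zf''(z)}{f'(z)}=\int_{\partial\mathbb D}\frac{1+xz}{1-xz}\,d\mu(x)
\;\Longrightarrow\;
f'(z)=\exp\!\Bigl(-2\int_{\partial\mathbb D}\log(1-xz)\,d\mu(x)\Bigr)
\]
is exactly right, and discretizing $\mu$ by finitely supported probability measures produces $h_\nu$ of the stated form with $\beta_i=2\lambda_i\in(0,2]$ and $\sum\beta_i=2$. You have also correctly isolated the one genuine issue, namely upgrading weak-$*$ convergence of $\mu_\nu\to\mu$ to uniform convergence of $\int\log(1-xz)\,d\mu_\nu(x)$ on the compact set $E$; the equicontinuity argument you sketch (the family $\{x\mapsto\log(1-xz):z\in E\}$ is uniformly equicontinuous and bounded on $\partial\mathbb D$) is the right way to handle it, and composing with $\exp$ is then harmless. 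The alternative Carath\'eodory-kernel/polygon route you mention is closer in spirit to Robertson's original geometric argument, but your Herglotz approach is cleaner and entirely adequate for the purposes of this paper.
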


The following brief summary of Schur polynomials will assist in the proof of our main theorem. For more details we refer to \cite{Bakonyi-Constantinescu-1992}.

\bigskip

Let $c  =(c_0, \ldots ,c_n ) \in \textrm{Int} \, {\mathcal C}(n)$ and $\gamma =( \gamma_0, \ldots , \gamma_n )$
be the Schur parameter of $c $. Note that $c  =(c_0, \ldots ,c_n ) \in \textrm{Int} \, {\mathcal C}(n)$ forces
$|\gamma_0| < 1, \ldots , |\gamma_n| < 1$. Suppose that $\omega \in H_1^\infty ({\mathbb D})$
satisfies $\omega (z) = c_0 + \ldots + c_n z^n + \cdots $, and define $\omega_0, \omega_1, \ldots , \omega_n
\in H_1^\infty ({\mathbb D})$ by (\ref{eq:def_omega_k}) and $\omega_{n+1} \in H_1^\infty ({\mathbb D})$ by
$$
\omega_{n+1} = \frac{\omega_n(z) - \omega_n(0)}{z(1-\overline{\omega_n(0)} \omega_n(z))} .
$$
Then since $\gamma_k = \omega_k(0)$, $k=0, \ldots , n$, we have
\begin{equation}\label{eq:reverse_Algorithm_of_Schur}
\omega (z ) =  \omega_0 (z) \quad\mbox{and}\quad
\omega_k (z ) = \frac{z \omega_{k+1} (z)+ \gamma_k }{1+ \overline{\gamma_k }z \omega_{k+1} (z)},\quad (k=0,1,\ldots,n).
\end{equation}
We now define sequences of polynomials recursively by
\begin{equation}\label{eq:initial_condition}
 A_0 (z) =\overline{\gamma_0} , \; B_0 (z) =1, \; \widetilde{A}_0 (z) = 1  , \; \widetilde{B}_0 (z) = \gamma_0  ,
\end{equation}
and
\begin{equation}\label{eq:recurrence_formula}
 \begin{pmatrix}
  A_{k+1} (z) & \widetilde{A}_{k+1}(z) \\
  B_{k+1} (z) & \widetilde{B}_{k+1} (z) \\
 \end{pmatrix}
 =
 \begin{pmatrix}
  z & \overline{\gamma_{k+1}} \\
  \gamma_{k+1}z & 1 \\
 \end{pmatrix}
 \begin{pmatrix}
  A_k (z) & \widetilde{A}_k (z) \\
  B_k (z) & \widetilde{B}_k (z) \\
 \end{pmatrix} ,
\end{equation}
where $k=0,\ldots , n-1 $. Then from (\ref{eq:reverse_Algorithm_of_Schur}), we have
\begin{equation}\label{eq:recurrence_formula_for_omega_k}
      \omega (z) =
      \frac{z \widetilde{A}_k(z) \omega_{k+1} (z) + \widetilde{B}_k(z)}
      {zA_k(z) \omega_{k+1} (z) + B_k(z)}
      , \quad
      k=0,1, \ldots , n .
\end{equation}
The polynomials $A_k (z), B_k (z), \widetilde{A}_k (z)$ and $\widetilde{B}_k(z)$ ($k=0,1, \ldots , n$)
are of degree at most $k$, and are called the Schur polynomials associated with $\gamma$.
For convenience, we put $\omega^*(z) = \omega_{n+1}(z)$. Then by Theorem \ref{theorem-C} and
(\ref{eq:recurrence_formula_for_omega_k}) we obtain the following.

\begin{lemma}\label{lemma:Schur_polynomial_and_Caratheodoy_problem}
Let $c  = (c_0, \ldots , c_n) \in \text{\rm Int}\, {\mathcal C}(n)$, and $\gamma=( \gamma_0, \ldots , \gamma_n )$ be the Schur parameter of $c $.
Then for any $\omega \in H^\infty_1 ({\mathbb D})$ with $\omega (z) = c_0 + \cdots + c_n z^n + \cdots $ in ${\mathbb D}$, there exists a unique $\omega^* \in H^\infty_1 ({\mathbb D})$ such that
\begin{equation}\label{eq:SChur_representation}
\omega (z) = \frac{z\widetilde{A}_n(z) \omega^*(z)+ \widetilde{B}_n(z)}{zA_n(z)\omega^*(z) + B_n(z)} .
\end{equation}
Conversely, for any $\omega^* \in H^\infty_1 ({\mathbb D})$, if  $\omega$ is given by $(\ref{eq:SChur_representation})$, then $\omega$ satisfies $\omega (z) = c_0 + \cdots + c_n z^n + \cdots $ in ${\mathbb D}$. In particular, the function $\omega_{\gamma , \varepsilon }$ defined by (\ref{def:extremal_omega}) can be written as
\begin{equation}\label{eq:extremal_omega_and_schur_polynomials}
\omega_{\gamma , \varepsilon } (z)
   = \frac{ \varepsilon z\widetilde{A}_n(z)+ \widetilde{B}_n(z)}{ \varepsilon z A_n(z) + B_n(z)},
\end{equation}
and $P\circ \omega_{\gamma,\varepsilon } \in {\mathcal F}_\Omega (c )$.
\end{lemma}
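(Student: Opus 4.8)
The plan is to obtain the fractional-linear representation (\ref{eq:SChur_representation}) directly from the identity (\ref{eq:recurrence_formula_for_omega_k}) already established for $k=n$, to settle uniqueness by a transfer-matrix determinant computation, and to deduce the converse together with the formula (\ref{eq:extremal_omega_and_schur_polynomials}) from Theorem~\ref{theorem-C}. The starting point is the purely algebraic identity behind (\ref{eq:recurrence_formula_for_omega_k}): composing the reverse algorithm (\ref{eq:reverse_Algorithm_of_Schur}) gives $\omega=\sigma_{\gamma_0}(z\,\sigma_{\gamma_1}(\cdots z\,\sigma_{\gamma_n}(z\omega_{n+1})\cdots))$, while (\ref{eq:recurrence_formula_for_omega_k}) at $k=n$ gives the right-hand side of (\ref{eq:SChur_representation}) with $\omega_{n+1}$ in place of $\omega^\ast$; hence, as rational functions of $z$ and of a formal variable $v$ replacing $\omega_{n+1}$,
$$
\sigma_{\gamma_0}\bigl(z\,\sigma_{\gamma_1}(\cdots z\,\sigma_{\gamma_n}(zv)\cdots)\bigr)=\frac{z\widetilde{A}_n(z)\,v+\widetilde{B}_n(z)}{zA_n(z)\,v+B_n(z)}.
$$
Setting $\omega^\ast:=\omega_{n+1}$ then yields (\ref{eq:SChur_representation}); and $\omega^\ast\in H_1^\infty({\mathbb D})$ by the same Schwarz-lemma step as in part~(3) of Proposition~\ref{prop:reduction_to_C_prob}, since $|\gamma_n|=|\omega_n(0)|<1$. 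This gives existence.

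For uniqueness I would compute the transfer-matrix determinant: the matrix in (\ref{eq:recurrence_formula}) has determinant $z(1-|\gamma_{k+1}|^2)$ and the initial matrix in (\ref{eq:initial_condition}) has determinant $|\gamma_0|^2-1$, so
$$
A_n(z)\widetilde{B}_n(z)-\widetilde{A}_n(z)B_n(z)=-z^{\,n}\prod_{k=0}^{n}\bigl(1-|\gamma_k|^2\bigr),
$$
whence $zA_n(z)\widetilde{B}_n(z)-z\widetilde{A}_n(z)B_n(z)=-z^{\,n+1}\prod_{k=0}^{n}(1-|\gamma_k|^2)$, which is nonzero for every $z\in{\mathbb D}\setminus\{0\}$ because $c\in\textrm{Int}\,{\mathcal C}(n)$ forces $|\gamma_k|<1$ for all $k$. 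Thus for each fixed $z\neq0$ the map $w\mapsto\bigl(z\widetilde{A}_n(z)w+\widetilde{B}_n(z)\bigr)/\bigl(zA_n(z)w+B_n(z)\bigr)$ is a genuine (injective) M\"obius transformation, so $\omega^\ast(z)$ is determined by $\omega$ for $z\in{\mathbb D}\setminus\{0\}$ and then on all of ${\mathbb D}$ by analyticity. At $z=0$ the right-hand side of (\ref{eq:SChur_representation}) degenerates to $\widetilde{B}_n(0)/B_n(0)=\gamma_0=c_0$ regardless of $\omega^\ast$, which is consistent, since uniqueness of $\omega^\ast$ as an element of $H_1^\infty({\mathbb D})$ is not affected by the value forced on it at a single point.

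For the converse, given an arbitrary $\omega^\ast\in H_1^\infty({\mathbb D})$ and $\omega$ defined by (\ref{eq:SChur_representation}), the displayed identity with $v=\omega^\ast$ rewrites $\omega$ as $\sigma_{\gamma_0}(z\,\sigma_{\gamma_1}(\cdots z\,\sigma_{\gamma_n}(z\omega^\ast(z))\cdots))$, which by {\rm (\ref{interior})} of Theorem~\ref{theorem-C} is a solution of the Carath\'{e}odory problem with data $c$; in particular $\omega(z)=c_0+\cdots+c_nz^n+\cdots$ in ${\mathbb D}$. Specializing to the constant function $\omega^\ast\equiv\varepsilon$ with $\varepsilon\in\overline{\mathbb D}$ gives (\ref{eq:extremal_omega_and_schur_polynomials}) immediately from the definition (\ref{def:extremal_omega}), and shows that $\omega_{\gamma,\varepsilon}$ solves the Carath\'{e}odory problem with data $c$; since $\omega_{\gamma,\varepsilon}({\mathbb D})\subset{\mathbb D}$ (it is a finite Blaschke product when $|\varepsilon|=1$, and has sup-norm $<1$ otherwise), the composition $P\circ\omega_{\gamma,\varepsilon}$ is a well-defined analytic map of ${\mathbb D}$ into $\Omega$ with $\bigl(P^{-1}\circ(P\circ\omega_{\gamma,\varepsilon})\bigr)(z)=\omega_{\gamma,\varepsilon}(z)=c_0+\cdots+c_nz^n+\cdots$, i.e. $P\circ\omega_{\gamma,\varepsilon}\in{\mathcal F}_\Omega(c)$.

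I do not expect a serious obstacle here; the only point requiring a little care is the bookkeeping that legitimizes using the formal identity both for the honest function $\omega_{n+1}$ and for an arbitrary $\omega^\ast$ (respectively the constant $\varepsilon$), together with the observation that the degeneracy of (\ref{eq:SChur_representation}) at $z=0$ spoils neither existence nor uniqueness. Everything else is the standard transfer-matrix calculus for Schur parameters combined with a direct appeal to Theorem~\ref{theorem-C}.
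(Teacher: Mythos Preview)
Your proposal is correct and follows essentially the same route as the paper, which simply states that the lemma follows ``by Theorem~\ref{theorem-C} and (\ref{eq:recurrence_formula_for_omega_k})'' after setting $\omega^\ast=\omega_{n+1}$. Your determinant computation for uniqueness is precisely the content of Lemma~\ref{lemma:determinat_of_Mobius_transformation}, which the paper proves separately just after this lemma; otherwise the argument (existence via $\omega^\ast=\omega_{n+1}$, the converse via Theorem~\ref{theorem-C}(\ref{interior}), and the specialization $\omega^\ast\equiv\varepsilon$) coincides with the paper's, only spelled out in more detail.
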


We next give some  important properties of the Schur polynomials.
\bigskip

From (\ref{eq:initial_condition}) and
(\ref{eq:recurrence_formula}) it easily follows that $\widetilde{A}_k(z) $ is a monic polynomial of
degree $k$. Also
\begin{equation}\label{eq:B_of_zero}
B_k(0) = 1 , \quad \text{and} \quad  \widetilde{B}_k(0) = \gamma_0
\end{equation}
for $k=0,1, \ldots , n$ and the degrees of $A_k(z), B_k(z)$ and $\widetilde{B}_k(z)$ are at most $k$.

\begin{lemma}\label{lemma:relation_between_Moebius_coefficients}
For $k=0, \ldots , n$,
\begin{equation}\label{relation_between_Moebius_coefficients}
  \widetilde{A}_k (z) = z^k \overline{B_k (1/\overline{z})} ,
   \quad
   \widetilde{B}_k (z) = z^k \overline{A_k (1/\overline{z})} .
\end{equation}
\end{lemma}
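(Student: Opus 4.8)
The plan is to prove the two identities in \eqref{relation_between_Moebius_coefficients} simultaneously by induction on $k$, using the recurrence \eqref{eq:recurrence_formula}. To set up the induction cleanly, I would first record how the operation $p(z) \mapsto z^k \overline{p(1/\overline z)}$ (the "reversal" of a polynomial of formal degree $k$) interacts with multiplication by $z$ and with the $2\times 2$ transfer matrix in \eqref{eq:recurrence_formula}. Writing $p^{*[k]}(z) := z^k\,\overline{p(1/\overline z)}$, the key bookkeeping facts are: $(z\,p)^{*[k+1]}(z) = p^{*[k]}(z)$, and $(\alpha\,p + \beta\,q)^{*[k]} = \overline\alpha\, p^{*[k]} + \overline\beta\, q^{*[k]}$ for constants $\alpha,\beta$. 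These are immediate from the definition once one checks the degree bookkeeping, i.e.\ that $A_k,B_k,\widetilde B_k$ have degree at most $k$ and $\widetilde A_k$ is monic of degree $k$, which is already noted in the excerpt just above the lemma.

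For the base case $k=0$: from \eqref{eq:initial_condition}, $\widetilde A_0(z)=1$ and $z^0\overline{B_0(1/\overline z)} = \overline{1} = 1$, while $\widetilde B_0(z) = \gamma_0$ and $z^0\overline{A_0(1/\overline z)} = \overline{\overline{\gamma_0}} = \gamma_0$; so both identities hold. For the inductive step, assume \eqref{relation_between_Moebius_coefficients} holds at level $k$. Expanding \eqref{eq:recurrence_formula} row by row gives
\[
\widetilde A_{k+1}(z) = z\,\widetilde A_k(z) + \overline{\gamma_{k+1}}\,\widetilde B_k(z),
\qquad
\widetilde B_{k+1}(z) = \gamma_{k+1} z\,\widetilde A_k(z) + \widetilde B_k(z),
\]
\[
A_{k+1}(z) = z\,A_k(z) + \overline{\gamma_{k+1}}\,B_k(z),
\qquad
B_{k+1}(z) = \gamma_{k+1} z\,A_k(z) + B_k(z).
\]
Now apply the reversal $(\cdot)^{*[k+1]}$ to $B_{k+1}$: using the two bookkeeping facts above together with the degree-$k$ reversal of $A_k$ and $B_k$,
\[
z^{k+1}\overline{B_{k+1}(1/\overline z)} = \overline{\gamma_{k+1}}\,\bigl(z\,A_k(z)\bigr)^{*[k+1]}{}^{?} \ \cdots
\]
— concretely, $(B_{k+1})^{*[k+1]} = \overline{\gamma_{k+1}}\,(zA_k)^{*[k+1]} + (B_k)^{*[k+1]} = \overline{\gamma_{k+1}}\,A_k^{*[k]} + B_k^{*[k]} = \overline{\gamma_{k+1}}\,\widetilde B_k(z) + z\,\widetilde A_k(z)$ after invoking the inductive hypothesis $A_k^{*[k]} = \widetilde B_k$ and $B_k^{*[k]} = \widetilde A_k$ — and this is exactly the formula displayed above for $\widetilde A_{k+1}(z)$. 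The same computation applied to $A_{k+1}$ gives $(A_{k+1})^{*[k+1]} = \widetilde B_{k+1}$. This closes the induction.

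The only genuine obstacle is the degree bookkeeping: the reversal operation $p\mapsto z^m\overline{p(1/\overline z)}$ depends on the \emph{formal} degree $m$ one assigns, not on the actual degree of $p$, so one must be careful that all four polynomials are treated as having formal degree $k$ at stage $k$ (even though $A_k, B_k, \widetilde B_k$ may have smaller actual degree, and $\widetilde A_k$ is exactly monic of degree $k$). Once this convention is fixed and the two elementary identities for $(\cdot)^{*[k]}$ are verified, the inductive step is a direct matching of the recurrences for $(A_{k+1})^{*[k+1]}, (B_{k+1})^{*[k+1]}$ against those for $\widetilde B_{k+1}, \widetilde A_{k+1}$, with nothing deeper involved. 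I would therefore state the convention explicitly at the start of the proof to avoid any ambiguity, and then carry out the two-line induction.
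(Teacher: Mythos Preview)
Your approach is essentially identical to the paper's: both prove the two identities simultaneously by induction on $k$, verifying the base case from \eqref{eq:initial_condition} and then pushing the recurrence \eqref{eq:recurrence_formula} through. The paper simply substitutes the inductive hypothesis into $\widetilde A_{k+1}(z)=z\widetilde A_k(z)+\overline{\gamma_{k+1}}\widetilde B_k(z)$ and regroups to recognise $z^{k+1}\overline{B_{k+1}(1/\overline z)}$, without introducing the reversal notation $p^{*[k]}$; the substance is the same.

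One small slip in your write-up: your two stated bookkeeping facts do not actually handle the term $(B_k)^{*[k+1]}$, since $B_k$ has formal degree $k$ and is not of the form $z\cdot(\text{something})$. The identity you need there is $(p)^{*[k+1]}=z\cdot p^{*[k]}$ for $\deg p\le k$, which gives $(B_k)^{*[k+1]}=z\,B_k^{*[k]}$. Accordingly, your displayed intermediate expression ``$\overline{\gamma_{k+1}}\,A_k^{*[k]} + B_k^{*[k]}$'' is missing a factor of $z$ on the second term; it should read $\overline{\gamma_{k+1}}\,A_k^{*[k]} + z\,B_k^{*[k]}$. You land on the correct final expression $\overline{\gamma_{k+1}}\,\widetilde B_k(z)+z\,\widetilde A_k(z)$ anyway, so the argument is sound once this bookkeeping line is corrected.
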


\begin{proof}
When $k=0$, (\ref{relation_between_Moebius_coefficients}) directly follows form (\ref{eq:initial_condition}).
Assume that (\ref{relation_between_Moebius_coefficients}) holds for $k \geq 0$,
then by (\ref{eq:recurrence_formula}) we have
\begin{eqnarray*}
\widetilde{A}_{k+1} (z)
     &=&
     z \widetilde{A}_k (z) + \overline{\gamma_{k+1}} \widetilde{B}_k (z)\\
     &=&
     z^{k+1} \overline{B_k(1/\overline{z})} + \overline{\gamma_{k+1}} z^k  \overline{A_k(1/ \overline{z})}\\
     &=&
     z^{k+1}
     \left\{\overline{\gamma_{k+1}(1/ \overline{z}) A_k(1/ \overline{z}) + B_k(1/ \overline{z})}\right\}
     =
     z^{k+1} \overline{B_{k+1} (1/ \overline{z})} .
\end{eqnarray*}
Similarly  $\widetilde{B}_{k+1} (z) = z^{k+1} \overline{A_{k+1} (1/ \overline{z})}$.
\end{proof}

\begin{lemma}\label{lemma:determinat_of_Mobius_transformation}
For $k=0, \ldots , n$,
\begin{equation}\label{eq:determinat_of_Mobius_transformation}
  \widetilde{A}_k (z) B_k(z) - A_k(z)  \widetilde{B}_k(z) = z^k \prod_{\ell=0}^k (1-|\gamma_\ell|^2) .
\end{equation}
\end{lemma}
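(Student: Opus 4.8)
The plan is to prove \eqref{eq:determinat_of_Mobius_transformation} by induction on $k$, exactly in the style of the proof of Lemma~\ref{lemma:relation_between_Moebius_coefficients}. For the base case $k=0$, the initial data \eqref{eq:initial_condition} give $\widetilde{A}_0(z)B_0(z) - A_0(z)\widetilde{B}_0(z) = 1\cdot 1 - \overline{\gamma_0}\cdot\gamma_0 = 1-|\gamma_0|^2$, which is $z^0\prod_{\ell=0}^0(1-|\gamma_\ell|^2)$ as required.

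For the inductive step, observe that the left-hand side of \eqref{eq:determinat_of_Mobius_transformation} is precisely the determinant of the matrix
$$
M_k(z) = \begin{pmatrix} A_k(z) & \widetilde{A}_k(z) \\ B_k(z) & \widetilde{B}_k(z) \end{pmatrix},
$$
up to sign: indeed $\det M_k(z) = A_k(z)\widetilde{B}_k(z) - \widetilde{A}_k(z)B_k(z)$, so the quantity we want equals $-\det M_k(z)$. The recurrence \eqref{eq:recurrence_formula} reads $M_{k+1}(z) = T_{k+1}(z)\,M_k(z)$ where $T_{k+1}(z) = \begin{pmatrix} z & \overline{\gamma_{k+1}} \\ \gamma_{k+1}z & 1\end{pmatrix}$, and $\det T_{k+1}(z) = z - |\gamma_{k+1}|^2 z = z(1-|\gamma_{k+1}|^2)$. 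By multiplicativity of the determinant, $\det M_{k+1}(z) = z(1-|\gamma_{k+1}|^2)\det M_k(z)$, and feeding in the inductive hypothesis $-\det M_k(z) = z^k\prod_{\ell=0}^k(1-|\gamma_\ell|^2)$ gives $-\det M_{k+1}(z) = z^{k+1}\prod_{\ell=0}^{k+1}(1-|\gamma_\ell|^2)$, completing the induction.

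There is essentially no obstacle here: the only thing to be careful about is the sign bookkeeping between $\det M_k$ and the expression $\widetilde{A}_kB_k - A_k\widetilde{B}_k$, and the fact that $c\in\mathrm{Int}\,\mathcal{C}(n)$ guarantees $|\gamma_\ell|<1$ for all $\ell$, so the product on the right is a genuine nonzero polynomial factor (this is what will later be used to control where $zA_n(z)\omega^*(z)+B_n(z)$ vanishes). One may alternatively phrase the whole argument in one line by taking determinants directly in \eqref{eq:recurrence_formula}, which is the cleanest write-up.
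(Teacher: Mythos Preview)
Your proof is correct and follows essentially the same approach as the paper: induction on $k$ using the recurrence \eqref{eq:recurrence_formula}. The only cosmetic difference is that the paper expands the products $\widetilde{A}_{k+1}B_{k+1}-A_{k+1}\widetilde{B}_{k+1}$ by hand and simplifies, whereas you package the same computation as multiplicativity of $\det$ applied to $M_{k+1}=T_{k+1}M_k$; the algebra is identical.
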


\begin{proof}
We use induction on $k$. When $k=0$, (\ref{eq:determinat_of_Mobius_transformation})
 follows directly from (\ref{eq:initial_condition}). Assume
(\ref{eq:determinat_of_Mobius_transformation}) holds for $k \geq 0$,
then by (\ref{eq:recurrence_formula}) we have
\begin{align*}
& \widetilde{A}_{k+1} (z) B_{k+1}(z)- A_{k+1}(z) \widetilde{B}_{k+1}(z) \\
=& \{ z \widetilde{A}_k (z) + \overline{\gamma_{k+1}} \widetilde{B}_k(z) \} \{ \gamma_{k+1} z A_k(z) + B_k(z) \}\\
& - \{ z A_k(z) + \overline{\gamma_{k+1}} B_k(z) \} \{ \gamma_{k+1} z \widetilde{A}_k(z) + \widetilde{B}_k(z) \}\\
=& z(1-|\gamma_{k+1}|^2 )\{\widetilde{A}_k (z) B_k(z) - A_k(z)  \widetilde{B}_k(z) \}
  = z^{k+1} \prod_{\ell=0}^{k+1} (1-| \gamma_\ell |^2) .
\end{align*}
\end{proof}

\begin{lemma}\label{lemma:denominator_has_no_zeros}
For $k=0, \ldots , n $, the inequality
\begin{equation}\label{ineq:denominator_has_no_zeros}
|B_k(z)|^2 - | A_k (z)|^2
\geq \prod_{\ell=0}^k (1-|\gamma_\ell|^2)
\end{equation}
holds for $z\in \overline{\mathbb D}$.
\end{lemma}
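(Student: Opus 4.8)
The plan is a straightforward induction on $k$ using the recurrence (\ref{eq:recurrence_formula}), keeping $z \in \overline{\mathbb D}$ fixed throughout. For the base case $k=0$, formula (\ref{eq:initial_condition}) gives $|B_0(z)|^2 - |A_0(z)|^2 = 1 - |\gamma_0|^2$, which is exactly $\prod_{\ell=0}^0 (1-|\gamma_\ell|^2)$, so (\ref{ineq:denominator_has_no_zeros}) holds, with equality.

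For the inductive step I would read off from (\ref{eq:recurrence_formula}) that $A_{k+1}(z) = zA_k(z) + \overline{\gamma_{k+1}}B_k(z)$ and $B_{k+1}(z) = \gamma_{k+1}zA_k(z) + B_k(z)$, expand $|B_{k+1}(z)|^2$ and $|A_{k+1}(z)|^2$, and observe that the mixed term $2\,\mathrm{Re}\bigl(\gamma_{k+1}z A_k(z)\overline{B_k(z)}\bigr)$ occurs in both expansions and hence cancels in the difference. What remains is the clean identity
\[
|B_{k+1}(z)|^2 - |A_{k+1}(z)|^2 = (1-|\gamma_{k+1}|^2)\bigl(|B_k(z)|^2 - |z|^2|A_k(z)|^2\bigr).
\]
Since $|z|^2 \leq 1$ we get $|B_k(z)|^2 - |z|^2|A_k(z)|^2 \geq |B_k(z)|^2 - |A_k(z)|^2$, and because $c \in \mathrm{Int}\,\mathcal{C}(n)$ forces $|\gamma_{k+1}| < 1$, multiplying the induction hypothesis through by $1-|\gamma_{k+1}|^2 > 0$ yields $|B_{k+1}(z)|^2 - |A_{k+1}(z)|^2 \geq \prod_{\ell=0}^{k+1}(1-|\gamma_\ell|^2)$, which closes the induction.

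There is no real obstacle here; the only two points that need a little care are that the cancellation of the cross terms is exact, so that no sign information about $\mathrm{Re}(\gamma_{k+1}zA_k\overline{B_k})$ is required, and that the bound $|z|^2 \leq 1$ must be applied to the $|A_k(z)|^2$ term (not to $|B_k(z)|^2$), where it makes the difference larger. As a consistency check one can note that on $\partial\mathbb{D}$ one has $1/\overline z = z$, so Lemma \ref{lemma:relation_between_Moebius_coefficients} gives $\widetilde A_k(z) = z^k\overline{B_k(z)}$ and $\widetilde B_k(z) = z^k\overline{A_k(z)}$; feeding these into the determinant identity (\ref{eq:determinat_of_Mobius_transformation}) and cancelling $z^k$ shows that (\ref{ineq:denominator_has_no_zeros}) is in fact an equality on $|z|=1$. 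In particular $B_k$ has no zeros in $\overline{\mathbb{D}}$, which is what the name of the lemma refers to and what legitimizes the divisions appearing in (\ref{eq:recurrence_formula_for_omega_k}) and (\ref{eq:SChur_representation}).
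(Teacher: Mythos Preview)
Your proof is correct and follows exactly the same induction as the paper: the base case from (\ref{eq:initial_condition}), the identity $|B_{k+1}|^2-|A_{k+1}|^2=(1-|\gamma_{k+1}|^2)(|B_k|^2-|z|^2|A_k|^2)$ obtained from (\ref{eq:recurrence_formula}), and then the estimate using $|z|\le 1$ and the induction hypothesis. Your additional remark that equality holds on $\partial\mathbb D$ (via Lemmas \ref{lemma:relation_between_Moebius_coefficients} and \ref{lemma:determinat_of_Mobius_transformation}) is a correct and pleasant observation not stated in the paper.
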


\begin{proof}
When $k=0$,  it follows from (\ref{eq:initial_condition}) that $|B_0(z)|^2-|A_0(z)|^2 = 1-|\gamma_0|^2$.
Assume (\ref{ineq:denominator_has_no_zeros}) holds for $k \geq 0$,
then for $|z| \leq 1$,  and  (\ref{eq:recurrence_formula}) we have
\begin{align*}
   & |B_{k+1}(z)|^2  - |A_{k+1}(z)|^2
\\
   =&
   |\gamma_{k+1} z A_k(z) + B_k(z) |^2
   -
   |z A_k(z) + \overline{\gamma_{k+1} }B_k(z)|^2
\\
  =&
   (1-|\gamma_{k+1}|^2) (|B_k(z)|^2  - |z|^2 |A_k(z)|^2 )
\\
  \geq&
   (1-|\gamma_{k+1}|^2) (|B_k(z)|^2  - |A_k(z)|^2 )
\\
  \geq&
   (1-|\gamma_{k+1}|^2) \prod_{\ell=0}^k (1-|\gamma_\ell |^2 )
   = \prod_{\ell=0}^{k+1} (1-|\gamma_\ell |^2 ) .
\end{align*}
\end{proof}

\begin{lemma}\label{lemma: tilde_omega_is_hol_in_D}
For $k=0, \ldots , n$, the inequality
\begin{equation*}\label{ineq: tilde_omega_is_hol_in_D}
|\widetilde{B}_k (z)| < |B_k(z)|
\end{equation*}
holds for $z\in \overline{\mathbb D}$.
\end{lemma}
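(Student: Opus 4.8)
The plan is to prove the strict inequality $|\widetilde{B}_k(z)| < |B_k(z)|$ on $\overline{\mathbb D}$ by combining the two preceding lemmas. First I would use Lemma~\ref{lemma:relation_between_Moebius_coefficients}, which gives $\widetilde{B}_k(z) = z^k \overline{A_k(1/\overline z)}$. The key observation is that for $z \in \overline{\mathbb D}\setminus\{0\}$ we have $|\widetilde{B}_k(z)| = |z|^k \, |A_k(1/\overline z)|$, so I want to compare this with $|B_k(z)|$; equivalently, setting $B_k^*(z) := z^k \overline{B_k(1/\overline z)} = \widetilde{A}_k(z)$, I want $|\widetilde{B}_k(z)| < |B_k^*(z)|$ precisely when $|z|^k|A_k(1/\overline z)| < |z|^k|B_k(1/\overline z)|$, i.e. when $|A_k(w)| < |B_k(w)|$ for $w = 1/\overline z$ with $|w| \ge 1$. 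So the heart of the matter is to show $|A_k(w)| < |B_k(w)|$ for $|w|\ge 1$, and then relate $|B_k^*(z)|$ back to $|B_k(z)|$.

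For the first piece, Lemma~\ref{lemma:denominator_has_no_zeros} already gives $|B_k(z)|^2 - |A_k(z)|^2 \ge \prod_{\ell=0}^k(1-|\gamma_\ell|^2) > 0$ for $|z|\le 1$ (the product is strictly positive since $c \in \mathrm{Int}\,\mathcal C(n)$ forces each $|\gamma_\ell|<1$). To get the analogous strict inequality for $|w|\ge 1$, I would run essentially the same induction as in the proof of Lemma~\ref{lemma:denominator_has_no_zeros}, but tracking $|\widetilde{B}_k(z)|^2 - |\widetilde{A}_k(z)|^2$ instead: from the recurrence~(\ref{eq:recurrence_formula}) one computes, for $|z|\le 1$, that $|\widetilde{B}_{k+1}(z)|^2 - |\widetilde{A}_{k+1}(z)|^2 = (1-|\gamma_{k+1}|^2)\bigl(|z|^2|\widetilde{B}_k(z)|^2 - |\widetilde{A}_k(z)|^2\bigr)$, which is a \emph{decreasing} quantity and so cannot be bounded below by a positive constant — that is the wrong direction. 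The cleaner route is: apply Lemma~\ref{lemma:denominator_has_no_zeros} to the point $1/\overline z$ when $|z|<1$ (so $|1/\overline z|>1$), but that lemma is only stated for $\overline{\mathbb D}$. Instead I would directly use the relations~(\ref{relation_between_Moebius_coefficients}): for $z \in \overline{\mathbb D}\setminus\{0\}$, $|B_k(z)|^2 - |\widetilde{B}_k(z)|^2 = |z|^{2k}\bigl(|B_k(1/\overline z) \cdot z^{-k}|^2 \cdot |z|^{2k} - |A_k(1/\overline z)|^2\bigr)$; rewriting, $|B_k(z)|^2 - |\widetilde{B}_k(z)|^2 = |\widetilde{A}_k(z)|^2 - |z|^{2k}|A_k(1/\overline z)|^2$ is not quite it either. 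The clean identity I actually want is obtained by applying the induction directly to the difference $D_k(z) := |B_k(z)|^2 - |\widetilde{B}_k(z)|^2$: using $B_{k+1} = \gamma_{k+1}z A_k + B_k$ and $\widetilde{B}_{k+1} = \gamma_{k+1} z \widetilde{A}_k + \widetilde{B}_k$ together with Lemma~\ref{lemma:relation_between_Moebius_coefficients} to convert $A_k,\widetilde{A}_k$ into $B_k,\widetilde{B}_k$ at the reflected point, one should arrive at a positive-definite expression.

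The main obstacle will be setting up the induction so that the strictness survives at $z = 0$ and on $\partial\mathbb D$ simultaneously. At $z=0$ we have $|\widetilde{B}_k(0)| = |\gamma_0| < 1 = |B_k(0)|$ by~(\ref{eq:B_of_zero}), so the base of any such argument is fine; the difficulty is purely in propagating strict inequality through~(\ref{eq:recurrence_formula}) without losing it where $|z|=1$. The most robust fix, which I would adopt, is to reduce to Lemma~\ref{lemma:denominator_has_no_zeros} after a Möbius/reflection substitution: for fixed $z_1$ with $|z_1| = 1$, the polynomial identities~(\ref{relation_between_Moebius_coefficients}) give $|\widetilde{B}_k(z_1)| = |A_k(z_1)|$ and $|\widetilde{A}_k(z_1)| = |B_k(z_1)|$, hence on $\partial\mathbb D$ the desired inequality $|\widetilde{B}_k| < |B_k|$ is equivalent to $|A_k| < |\widetilde{A}_k|$; but $|\widetilde{A}_k(z_1)| = |B_k(z_1)| > |A_k(z_1)|$ by Lemma~\ref{lemma:denominator_has_no_zeros}. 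For $|z| < 1$ the inequality $|\widetilde{B}_k(z)| < |B_k(z)|$ then follows from the maximum modulus principle applied to the holomorphic function $\widetilde{B}_k/B_k$, once one checks $B_k$ has no zeros in $\overline{\mathbb D}$ — which is immediate from Lemma~\ref{lemma:denominator_has_no_zeros} since $|B_k(z)|^2 \ge |A_k(z)|^2 + \prod_{\ell}(1-|\gamma_\ell|^2) > 0$. This two-step argument (boundary via the reflection identities and Lemma~\ref{lemma:denominator_has_no_zeros}, interior via the maximum principle) is the route I would write up.
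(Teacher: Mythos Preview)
Your final two-step route---use the reflection identity $\widetilde{B}_k(z)=z^k\overline{A_k(1/\overline z)}$ on $\partial\mathbb D$ together with Lemma~\ref{lemma:denominator_has_no_zeros} to get $|\widetilde{B}_k|<|B_k|$ on the boundary, then apply the maximum modulus principle to $\widetilde{B}_k/B_k$ after noting $B_k$ is zero-free on $\overline{\mathbb D}$---is exactly the paper's proof. The exploratory detours in your first two paragraphs are unnecessary, but the argument you ultimately commit to is correct and essentially identical to the original.
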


\begin{proof}
By Lemma \ref{lemma:denominator_has_no_zeros}, the function $B_k(z)$ has no zeros on $\overline{\mathbb D}$.
Hence  $\widetilde{B}_k(z)/B_k(z)$ is analytic on $\overline{\mathbb D}$. For $|z|=1$,  using
Lemmas \ref{lemma:relation_between_Moebius_coefficients} and \ref{lemma:denominator_has_no_zeros} we have
\begin{align*}
   |B_k(z)|^2 - |\widetilde{B}_k(z)|^2
    &=
    |B_k(z)|^2 - |z^k \overline{A_k (1/\overline{z})}|^2
\\
    &= |B_k(z)|^2 - |A_k(z)|^2
    \geq \prod_{\ell=0}^k (1-|\gamma_\ell |^2 ) > 0 .
\end{align*}
Thus we have $|\widetilde{B}_k(z)/B_k(z)| < 1$ on $\partial {\mathbb D}$,
and hence by the maximum modulus principle for analytic functions,
$|\widetilde{B}_k(z)/B_k(z)| < 1$ holds on $\overline{\mathbb D}$.
\end{proof}

%

\section{Proof of the Main Theorem}
First we show that $V_\Omega^j (z_0, c ) $ is a compact and convex subset of ${\mathbb C}$.

\begin{proposition}\label{prop:compact_and_convex}
For $c =(c_0,\ldots ,c_n) \in \text{\rm Int}\,{\mathcal C}(n)$, the class
${\mathcal F}_\Omega (c)$ is a compact and convex subset of ${\mathcal A}({\mathbb D})$.
\end{proposition}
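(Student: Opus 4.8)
The plan is to handle convexity and compactness separately, in each case passing to the description via $\omega = P^{-1}\circ g$, where the class is cleanest. The one fact to keep in mind throughout is that $c \in \text{Int}\,{\mathcal C}(n)$ forces $|c_0| = |\gamma_0| < 1$, so that \emph{every} $\omega \in H_1^\infty({\mathbb D})$ with $\omega(z) = c_0 + c_1 z + \cdots + c_n z^n + \cdots$ has $\omega(0) = c_0 \in {\mathbb D}$ and is therefore not a unimodular constant; hence $P\circ\omega$ is a genuine element of ${\mathcal F}_\Omega(c)$ (and, taking $\omega = \omega_{\gamma,\varepsilon}$, ${\mathcal F}_\Omega(c) \neq \emptyset$). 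Writing $S_c = \{\omega \in H_1^\infty({\mathbb D}) : \omega(z) = c_0 + c_1 z + \cdots + c_n z^n + \cdots\}$, we have a bijection ${\mathcal F}_\Omega(c) = \{P\circ\omega : \omega \in S_c\}$ with $S_c \subset H_1^\infty({\mathbb D})\backslash E$.

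For convexity I would take $g_1, g_2 \in {\mathcal F}_\Omega(c)$ and $t \in [0,1]$, and set $g = tg_1 + (1-t)g_2$. Since $\Omega$ is convex and $g(z)$ is a convex combination of $g_1(z), g_2(z) \in \Omega$, we get $g({\mathbb D}) \subset \Omega$, so $g \in {\mathcal F}_\Omega$; it remains to see that the first $n+1$ Taylor coefficients of $P^{-1}\circ g$ are $c_0, \ldots, c_n$. Putting $\omega_i = P^{-1}\circ g_i$, the functions $\omega_1, \omega_2$ agree up to order $n$, so $\omega_1 - \omega_2 = {\mathcal O}(z^{n+1})$; applying Lemma \ref{lemma:asymptotic_series} with $\varphi = P$ gives $g_1 - g_2 = {\mathcal O}(z^{n+1})$, whence $g - g_1 = (1-t)(g_2 - g_1) = {\mathcal O}(z^{n+1})$; applying Lemma \ref{lemma:asymptotic_series} once more with $\varphi = P^{-1}$ gives $P^{-1}\circ g - \omega_1 = {\mathcal O}(z^{n+1})$, so $P^{-1}\circ g$ agrees with $\omega_1$ up to order $n$, i.e. $g \in {\mathcal F}_\Omega(c)$.

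For compactness I would argue that $H_1^\infty({\mathbb D})$ is a normal family and is closed in ${\mathcal A}({\mathbb D})$, hence compact; and $S_c$ is the intersection of $H_1^\infty({\mathbb D})$ with the closed conditions $\omega^{(k)}(0)/k! = c_k$ for $k = 0, \ldots, n$ (Taylor coefficients depend continuously on $\omega$ by Cauchy's formula), so $S_c$ is compact. Since $S_c \subset H_1^\infty({\mathbb D})\backslash E$, the map $\omega \mapsto P\circ\omega$ is well defined on $S_c$ and continuous there: for a compact $K \subset {\mathbb D}$ and $\omega \in S_c$, the set $\omega(K)$ is a compact subset of ${\mathbb D}$ (by the maximum modulus principle if $\omega$ is non-constant, trivially otherwise), so $\omega_k(K)$ eventually lies in a fixed compact subset of ${\mathbb D}$ on which $P$ is uniformly continuous, giving $P\circ\omega_k \to P\circ\omega$ uniformly on $K$. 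Thus ${\mathcal F}_\Omega(c)$ is the continuous image of the compact set $S_c$ and is compact. (Alternatively one may parametrize $S_c$ by $H_1^\infty({\mathbb D})$ through the Schur representation of Lemma \ref{lemma:Schur_polynomial_and_Caratheodoy_problem}, using Lemma \ref{lemma:denominator_has_no_zeros} to keep the denominator bounded away from $0$ on $\overline{\mathbb D}$.)

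The main obstacle is purely bookkeeping. In the convexity step one must transport ``agreement up to order $n$'' through $P$ and then back through $P^{-1}$, which is exactly the role of Lemma \ref{lemma:asymptotic_series}. In the compactness step the delicate point is that $S_c$ must stay off the degenerate set $E$ — otherwise $P\circ\omega$ is not even defined and the composition map fails to be continuous at the limit — and this is precisely where the hypothesis $c \in \text{Int}\,{\mathcal C}(n)$, i.e. $|c_0| < 1$, is essential.
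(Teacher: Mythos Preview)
Your convexity argument is essentially identical to the paper's: both apply Lemma~\ref{lemma:asymptotic_series} twice, once with $\varphi = P$ and once with $\varphi = P^{-1}$, to transport the order-$n$ agreement through the composition. Your compactness argument, however, takes a genuinely different route. The paper works directly inside ${\mathcal F}_\Omega(c)$: it proves normality by applying Schwarz's lemma to $(P^{-1}\circ g - c_0)/(1-\overline{c_0}\,P^{-1}\circ g)$ to trap $g(z)$ in the compact set $P(\overline{\Delta}(c_0,r))$ for $|z|\le r$, and then proves closedness by a separate argument --- if a limit function $g_0$ hit $\partial\Omega$, the open mapping theorem together with a support line of the convex set $\Omega$ would force $g_0({\mathbb D})$ to contain an exterior point of $\Omega$, a contradiction. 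You instead pull everything back to $S_c \subset H_1^\infty({\mathbb D})$, observe that $S_c$ is closed in the compact set $H_1^\infty({\mathbb D})$, and then push forward by the continuous map $\omega \mapsto P\circ\omega$; the hypothesis $|c_0|<1$ is used exactly to keep $S_c$ away from the unimodular constants so that this map is well defined and continuous. Your approach is arguably cleaner --- it avoids the open-mapping/support-line step entirely --- while the paper's approach has the minor advantage of never needing to verify continuity of $\omega \mapsto P\circ\omega$ in the compact-open topology. Both are correct.
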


\begin{proof}
For $g \in {\mathcal F}_\Omega (c )$, by Schwarz's lemma we have
$$
   \left|   \frac{P^{-1} (g(z)) -c_0}{1- \overline{c_0}P^{-1} (g(z))}   \right|
   \leq  |z|, \quad z \in {\mathbb D} .
$$
This implies $P^{-1}(g(z)) \in \overline{\Delta} (c_0, r) = \{ w \in {\mathbb C} : |w-c_0|/|1-\overline{c_0} w | \leq r \} (\subset {\mathbb D})$
for $|z| \leq r < 1$. Thus $g(z) \in P( \overline{\Delta} (c_0, r) )$ for any
$g \in {\mathcal F}_\Omega (c)$, and $|z| \leq r < 1$. Therefore ${\mathcal F}_\Omega (c)$ is
locally uniformly bounded, and hence by Montel's theorem forms a normal family.

\bigskip
We next show that ${\mathcal F}_\Omega (c)$ is closed.
\bigskip

Let $g_k \in {\mathcal F}_\Omega (c)$, $k \in {\mathbb N}$
and $g_0 \in {\mathcal A}({\mathbb D})$ such that $g_k \rightarrow g_0$ locally uniformly in ${\mathbb D}$
as $k \rightarrow \infty$. Then
$$
\frac{1}{j!}(P^{-1} \circ g_0)^{(j)}(0) = \frac{1}{j!}\lim_{k \rightarrow \infty} (P^{-1} \circ g_n)^{(j)}(0) = c_j
\quad\mbox{for}\quad j=0,\ldots , n.
$$
Thus $(P^{-1} \circ g_0)(z) = c_0 + c_1 z +\cdots + c_n z^n + \cdots $ for ${z\in \mathbb D}$.
Similarly, it follows that $g_0 ({\mathbb D}) \subset \overline{\Omega}$.
Now suppose that $g_0({\mathbb D}) \backslash \Omega \not= \emptyset$.
Then there exists $z^* \in {\mathbb D}$ such that $w^* = g_0(z^*) \in \partial \Omega$.
Since $g_0 (0) = P(c_0) \not= w^* =g_0 (z^*)$, $g_0$ is a non-constant analytic function and hence  is an open map.
Thus $g_0({\mathbb D})$ is a neighborhood of $w^* \in \partial \Omega$.
Since there exists a support line of the convex set $\Omega$ through $w^*$,
the neighborhood $g_0 ({\mathbb D})$ of $w^*$ contains an exterior point of $\Omega$.
This contradicts $g_0 ({\mathbb D}) \subset \overline{\Omega}$.
Hence $g_0 ({\mathbb D}) \subset \Omega$ and $g_0 \in {\mathcal F}_\Omega (c)$,
and so ${\mathcal F}_\Omega (c)$ is closed in ${\mathcal A}({\mathbb D})$.
Since ${\mathcal F}_\Omega (c)$ is normal and closed in the metric space ${\mathcal A}({\mathbb D})$,
it is therefore a compact subset of ${\mathcal A}({\mathbb D})$.

\bigskip
Next suppose $g_0$, $g_1 \in {\mathcal F}_\Omega (c )$. Then $P^{-1}(g_0(z)) - P^{-1}(g_1(z))
= \mathcal{O}(z^{n+1})$ as $z \rightarrow 0$. Hence by Lemma \ref{lemma:asymptotic_series}, $g_0(z)-g_1(z) = \mathcal{O} (z^{n+1})$ as $z \rightarrow 0$. Thus for any $t \in [0,1]$,
$g_t(z) = (1-t)g_0(z)+tg_1(z)$ satisfies $g_t(z) - g_0(z) = \mathcal{O} (z^{n+1})$ as $z \rightarrow 0$.
Hence  again using Lemma \ref{lemma:asymptotic_series}, it follows that $P^{-1}(g_t(z)) - P^{-1}(g_0(z)) = \mathcal{O} (z^{n+1})$,
as $z \rightarrow 0$. This  shows that $P^{-1} (g_t(z)) = c_0+ \cdots  + c_n z^n + \cdots $
for ${z\in \mathbb D}$. Since $\Omega $ is convex, $g_t (z) = (1-t)g_0 (z) + t g_1 (z) \in \Omega$ for all $z \in {\mathbb D}$.
Thus $g_t({\mathbb D}) \subset \Omega$, and hence $g_t \in {\mathcal F}_\Omega (c )$.
Therefore  ${\mathcal F}_\Omega (c )$ is convex.
\end{proof}

\begin{corollary}\label{cor:compact_and_convex}
For $c =(c_0,\ldots ,c_n) \in \text{\rm Int} \, {\mathcal C}(n)$, the set
$V_\Omega^j (z_0, c )$ is a compact and convex subset of ${\mathbb C}$.
\end{corollary}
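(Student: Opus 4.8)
The plan is to realize $V_\Omega^j(z_0,c)$ as the image of $\mathcal{F}_\Omega(c)$ under a continuous linear functional, and then to invoke the soft facts that a continuous image of a compact set is compact and an affine image of a convex set is convex. Concretely, I would introduce the map $\Phi : \mathcal{A}(\mathbb{D}) \to \mathbb{C}$ defined by $\Phi(g) = \int_0^{z_0} \zeta^j(g(\zeta) - g(0))\,d\zeta$, so that, directly from the definition in Problem \ref{the_problem}, $V_\Omega^j(z_0,c) = \Phi(\mathcal{F}_\Omega(c))$. The functional $\Phi$ is linear in $g$ (for $j=-1$ one does not split the integral, but $\Phi(\lambda g_0 + \mu g_1) = \lambda\Phi(g_0) + \mu\Phi(g_1)$ still holds termwise), hence affine; since $\mathcal{F}_\Omega(c)$ is convex by Proposition \ref{prop:compact_and_convex}, its image $V_\Omega^j(z_0,c)$ is convex.

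For compactness, it remains to verify that $\Phi$ is continuous for the topology of locally uniform convergence on $\mathcal{A}(\mathbb{D})$. If $g_k \to g_0$ locally uniformly, then since the segment $[0,z_0]$ is a compact subset of $\mathbb{D}$, the convergence $g_k \to g_0$ is uniform there, and by Cauchy's estimates also $g_k' \to g_0'$ uniformly on $[0,z_0]$. For $j \ge 0$ the integrands $\zeta^j(g_k(\zeta)-g_k(0))$ converge uniformly on $[0,z_0]$ to $\zeta^j(g_0(\zeta)-g_0(0))$, whence $\Phi(g_k)\to\Phi(g_0)$; for $j=-1$ one writes $\zeta^{-1}(g(\zeta)-g(0)) = \int_0^1 g'(t\zeta)\,dt$ to see that the integrand again converges uniformly on $[0,z_0]$, so $\Phi$ is continuous in all cases. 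Since $\mathcal{F}_\Omega(c)$ is compact by Proposition \ref{prop:compact_and_convex}, the set $V_\Omega^j(z_0,c) = \Phi(\mathcal{F}_\Omega(c))$ is a compact subset of $\mathbb{C}$. It is moreover nonempty, since $P \circ \omega_{\gamma,0} \in \mathcal{F}_\Omega(c)$ by Lemma \ref{lemma:Schur_polynomial_and_Caratheodoy_problem}, where $\gamma$ is the Schur parameter of $c$ and $\omega_{\gamma,0}$ is as in $(\ref{def:extremal_omega})$.

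There is essentially no real obstacle here: the corollary is a direct consequence of Proposition \ref{prop:compact_and_convex}. The only point that deserves a moment's care is the case $j=-1$, where one must note that $\zeta^{-1}(g(\zeta)-g(0))$ extends holomorphically across the origin and that this extension depends continuously on $g$; the integral representation $\int_0^1 g'(t\zeta)\,dt$ disposes of this cleanly.
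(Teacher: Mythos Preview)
Your proof is correct and follows essentially the same approach as the paper: both recognize $V_\Omega^j(z_0,c)$ as the image of the compact convex set $\mathcal{F}_\Omega(c)$ under the continuous linear functional $g \mapsto \int_0^{z_0} \zeta^j(g(\zeta)-g(0))\,d\zeta$, and conclude immediately. You supply more detail than the paper does---in particular your careful treatment of continuity in the case $j=-1$ via the representation $\zeta^{-1}(g(\zeta)-g(0)) = \int_0^1 g'(t\zeta)\,dt$, and the remark on nonemptiness---but the underlying argument is the same.
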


\begin{proof}
Since the functional ${\mathcal A}({\mathbb D}) \ni g \mapsto \int_0^{z_0} \zeta^j (g(\zeta ) - g(0)) \, d \zeta $
is linear and continuous on ${\mathcal A}({\mathbb D})$, and  $V_\Omega^j(z_0, c )$
is the image of the compact and convex subset ${\mathcal F}_\Omega (c)$ of ${\mathcal A}({\mathbb D})$
under this functional, $V_\Omega^j(z_0, c )$ is also a compact and convex subset of ${\mathbb C}$.
\end{proof}

\begin{proposition}\label{prop:interior_point}
Let $c =(c_0,\ldots ,c_n) \in {\rm Int} \, {\mathcal C}(n)$ and $z_0 \in {\mathbb D} \backslash \{ 0 \}$.
Then $Q_{\gamma,j}(z_0, 0) = \int_0^{z_0} \zeta^j \{ P( \omega_{\gamma, 0} (\zeta ) - P(c_0) \} \,  d \zeta $
is  an interior point of the set $V_\Omega^j(z_0, c )$, where $\gamma=(\gamma_0,\ldots , \gamma_n)$ is the Schur parameter of $c $.
\end{proposition}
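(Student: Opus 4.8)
The plan is to show that $Q_{\gamma,j}(z_0,0)$ lies in the interior of $V_\Omega^j(z_0,c)$ by exhibiting a two-real-parameter family of competitors in ${\mathcal F}_\Omega(c)$ whose images under the functional $g \mapsto \int_0^{z_0}\zeta^j(g(\zeta)-g(0))\,d\zeta$ sweep out a genuine neighbourhood of $Q_{\gamma,j}(z_0,0)$ in ${\mathbb C}$. Since, by Corollary 4.2, $V_\Omega^j(z_0,c)$ is convex, it suffices to produce a small analytic (or merely real-differentiable) disk of values around $Q_{\gamma,j}(z_0,0)$ with the property that its ``derivative'' spans ${\mathbb C}$ over ${\mathbb R}$; convexity then forces the interior point conclusion. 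The natural family to use is $\varepsilon \mapsto Q_{\gamma,j}(z_0,\varepsilon)$ itself, obtained from $\omega_{\gamma,\varepsilon}=\sigma_{\gamma_0}(z\sigma_{\gamma_1}(\cdots z\sigma_{\gamma_n}(\varepsilon z)\cdots))$ via Lemma 3.8; each $P\circ\omega_{\gamma,\varepsilon}$ lies in ${\mathcal F}_\Omega(c)$, so $Q_{\gamma,j}(z_0,\varepsilon)\in V_\Omega^j(z_0,c)$ for every $\varepsilon\in\overline{\mathbb D}$.

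The key computation is to differentiate $Q_{\gamma,j}(z_0,\varepsilon)$ with respect to $\varepsilon$ at $\varepsilon=0$ and check that this derivative is nonzero. Using the representation (\ref{eq:extremal_omega_and_schur_polynomials}),
$$
\omega_{\gamma,\varepsilon}(z) = \frac{\varepsilon z\widetilde{A}_n(z)+\widetilde{B}_n(z)}{\varepsilon zA_n(z)+B_n(z)},
$$
so
$$
\frac{\partial}{\partial\varepsilon}\omega_{\gamma,\varepsilon}(z)\Big|_{\varepsilon=0}
= \frac{z\bigl(\widetilde{A}_n(z)B_n(z)-A_n(z)\widetilde{B}_n(z)\bigr)}{B_n(z)^2}
= \frac{z^{n+1}\prod_{\ell=0}^n(1-|\gamma_\ell|^2)}{B_n(z)^2},
$$
by Lemma 3.6. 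Differentiating (\ref{def:extremal_Q}) under the integral sign and using that $P$ is conformal (hence $P'$ never vanishes), one gets
$$
\frac{\partial}{\partial\varepsilon}Q_{\gamma,j}(z_0,\varepsilon)\Big|_{\varepsilon=0}
= \prod_{\ell=0}^n(1-|\gamma_\ell|^2)\int_0^{z_0}\zeta^{j}\,\frac{\zeta^{n+1}\,P'(\omega_{\gamma,0}(\zeta))}{B_n(\zeta)^2}\,d\zeta.
$$
Since $c\in\text{Int}\,{\mathcal C}(n)$ forces $|\gamma_\ell|<1$ for all $\ell$, the leading product is a positive real number, and the integrand is an analytic function of $\zeta$ of the form $\zeta^{j+n+1}\cdot(\text{nonvanishing analytic})$; because $z_0\neq 0$ and $j+n+1\geq 0$, this integral is not identically the zero function of $z_0$ — in fact one can argue directly that it is nonzero for the given $z_0$ by noting that its Taylor expansion in $z_0$ begins with a nonzero term of order $j+n+2$ in the case $j\geq 0$ (or a logarithmic/first-order term when $j=-1$), so it cannot vanish without $P'(\omega_{\gamma,0}(\zeta))/B_n(\zeta)^2$ vanishing, which is impossible. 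Thus the derivative is a nonzero complex number $a$.

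Finally, because $Q_{\gamma,j}(z_0,\cdot)$ is holomorphic on a neighbourhood of $0$ with nonvanishing derivative there, its image contains a full neighbourhood of $Q_{\gamma,j}(z_0,0)$; alternatively, and more in the spirit of avoiding any implicit-function appeal, consider the real-analytic map $(s,t)\mapsto Q_{\gamma,j}(z_0,s+it)$, whose Jacobian at $(0,0)$ is $|a|^2\neq 0$, so its image near $(0,0)$ covers a neighbourhood of $Q_{\gamma,j}(z_0,0)$; all these image points lie in $V_\Omega^j(z_0,c)$, whence $Q_{\gamma,j}(z_0,0)\in\text{Int}\,V_\Omega^j(z_0,c)$. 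The main obstacle is the nonvanishing of the integral $\int_0^{z_0}\zeta^{j+n+1}P'(\omega_{\gamma,0}(\zeta))B_n(\zeta)^{-2}\,d\zeta$ for the specific $z_0\in{\mathbb D}\setminus\{0\}$: one needs to rule out accidental cancellation, which I expect to handle by the power-series-order argument sketched above (the first nonzero Taylor coefficient in $z_0$ is forced to be nonzero since $P'(c_0)\neq 0$ and $B_n(0)=1$), and the $j=-1$ case should be treated separately but identically, the antiderivative then involving a logarithm whose expansion again starts with an explicitly nonzero coefficient.
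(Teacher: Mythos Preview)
Your setup and computation of $\psi(z_0)=\partial_\varepsilon Q_{\gamma,j}(z_0,\varepsilon)|_{\varepsilon=0}$ coincide with the paper's, and your reduction to ``$\psi(z_0)\neq 0$ implies $Q_{\gamma,j}(z_0,0)$ is interior'' via the open mapping theorem is fine. The gap is precisely at the step you flag as ``the main obstacle'': your power-series-order argument does \emph{not} rule out accidental cancellation. Knowing that the Taylor expansion of $z_0\mapsto\psi(z_0)$ begins with a nonzero coefficient of order $n+j+2$ only tells you that $\psi$ is not identically zero; it says nothing about whether $\psi$ has an isolated zero at the particular $z_0\in{\mathbb D}\setminus\{0\}$ in the statement. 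The claim ``it cannot vanish without $P'(\omega_{\gamma,0}(\zeta))/B_n(\zeta)^2$ vanishing'' is simply false: an antiderivative of a nowhere-vanishing analytic function can certainly have zeros in the disk (e.g.\ $\int_0^z e^{10\zeta}\,d\zeta=(e^{10z}-1)/10$ vanishes at $z=\pi i/5\in{\mathbb D}$). Also, no logarithm appears when $j=-1$: the integrand is $\zeta^{n+j+1}$ times an analytic unit, and $n+j+1\geq n\geq 0$.

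This is exactly the point where the paper does real work. It shows $\psi$ has \emph{no} zeros in ${\mathbb D}\setminus\{0\}$ by proving the $p$-valent convexity condition $\operatorname{Re}\bigl(1+z\psi''(z)/\psi'(z)\bigr)\geq 0$ on ${\mathbb D}$, which via Lemma~\ref{lemma:p-valent_convexity} forces $\psi=a\psi_0^{\,n+j+2}$ for some $\psi_0\in\mathcal S^*$, hence $\psi(z)\neq 0$ for $z\neq 0$. To verify this inequality for a general convex $\Omega$, the paper invokes Robertson's approximation (Lemma~\ref{lemma:approx_by_polygon_mapping}) to reduce to $P'(z)=P'(0)\prod_i(1-\eta_i z)^{-\beta_i}$, after which the logarithmic derivative of $\psi'$ becomes an explicit sum whose real part is estimated using $\sum\beta_i=2$, $p_i\leq n$, and the fact (Lemma~\ref{lemma: tilde_omega_is_hol_in_D}) that the zeros of $B_n-\eta_i\widetilde B_n$ lie outside $\overline{\mathbb D}$. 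Your proposal is missing this entire mechanism (or any substitute for it), so as written it does not establish the proposition for arbitrary $z_0$.
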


\begin{proof}
For fixed $z_0 \in {\mathbb D}$,  note that $Q_{\gamma,j}(z_0, \varepsilon )$ defined by (\ref{def:extremal_Q})
is an analytic function of $\varepsilon \in \overline{\mathbb D}$. Since a non-constant analytic function is an open map,
in order to prove the proposition it suffices to show that $Q_{\gamma,j}(z_0, \varepsilon )$ is a non-constant
function of $\varepsilon$.

\bigskip

Let
$$
   \psi(z) =
   \left.
   \frac{\partial }{\partial \varepsilon} \left\{ Q_{\gamma,j}(z, \varepsilon ) \right\}
   \right|_{\varepsilon=0} .
$$
Then the problem reduces to showing  that $\psi(z) \not= 0$ for $z \in {\mathbb D} \backslash \{ 0 \}$.
By (\ref{eq:extremal_omega_and_schur_polynomials}) and Lemma \ref{lemma:determinat_of_Mobius_transformation}
we have
\begin{align*}
\psi(z)
&= \int_0^z \zeta^j P' \left( \frac{\widetilde{B}_n( \zeta )}{B_n(\zeta )} \right)
    \frac{\zeta \left\{ \widetilde{A}_n( \zeta ) B_n( \zeta ) - \widetilde{B}_n( \zeta ) A_n( \zeta ) \right\}}{B_n( \zeta )^2} \, d\zeta
\\
&= \prod_{k=0}^n (1-|\gamma_k|^2) \int_0^z \zeta^{n+j+1} P' \left( \frac{\widetilde{B}_n(\zeta )}{B_n( \zeta )} \right)
  \frac{d  \zeta}{B_n( \zeta )^2} .
\end{align*}
We note that $\psi$ has  a Taylor series representation of the form $\psi(z)=az^{n+j+2}+\cdots$,
with $a=(n+j+2)^{-1} \prod_{k=0}^n (1-|\gamma_k|^2) P'(c_0 )$. The assertion will be proved
provided we can  show that $\mbox{\rm Re}\, ( z\psi''(z)/\psi'(z)) \geq 0$, since in this case,
  Lemmas \ref{lemma:p-valent_convexity} and (\ref{eq:B_of_zero}) imply
that there exists a starlike univalent function $\psi_0 \in \mathcal{S}^*$ satisfying
$\psi(z) = a \psi_0(z)^{n+j+2}$, and so $\psi(z) $  has no zeros in ${\mathbb D} \backslash \{ 0 \}$.

\bigskip
In order to show $\text{\rm Re}\, ( z\psi''(z)/\psi'(z)) \geq 0$, by Lemma \ref{lemma:approx_by_polygon_mapping}
we may assume without loss of generality that
$$
 P'(z) = \frac{P'(0)}{\prod_{i=1}^m ( 1 - \eta_i z )^{\beta_i}},
$$
where $\eta_i \in \partial {\mathbb D}$, $0< \beta_i \leq 2$ $(i=1,2, \ldots , m)$, and $\sum_{i=1}^m \beta_i = 2$.
Under this assumption
$$
\psi(z) = P'(0) \prod_{k=0}^m (1-| \gamma_k |^2)
  \int_0^z \frac{\zeta^{n+j+1}} {\prod_{i=1}^m (B_n( \zeta ) - \eta_i \widetilde{B}_n( \zeta ))^{\beta_i}}\, d\zeta
$$
and so
$$
z\frac{\psi''(z)}{\psi'(z)}
  = n+j+1 - \sum_{i=1}^m \beta_i z \frac{d}{dz} \log \left\{ B_n (z)- \eta_i \widetilde{B}_n(z) \right\} .
$$
For $i=1, \ldots ,m$, let $z_{i1}, \ldots ,z_{ip_i}$ $(0 \leq p_i \leq n)$ be zeros of
$B_n(z) - \eta_i\widetilde{B}_n (z) $. Then by Lemma \ref{lemma: tilde_omega_is_hol_in_D},
we have $|z_{i \ell} | >1 $ for all $i =1, \ldots m$ and $\ell =1, \ldots , p_i$.
Since $B_n (0) =1 $ and $\widetilde{B}_n (0) = \gamma_0 $, it follows that
$B_n(z) - \eta_i \widetilde{B}_n (z) = (1- \eta_i \gamma_0 ) \prod_{\ell=1}^{p_i} (1-z/z_{i\ell})$.
Thus using the identity $w/(1-w) = 2^{-1}\{(1+w)/(1-w))-1\}$ we have
\begin{align*}
 \text{\rm Re} \left\{ z\frac{\psi''(z)}{\psi'(z)}   \right\}
  =&
  n+ j+1  + \text{\rm Re} \left\{ \sum_{i=1}^m  \beta_i \sum_{\ell=1}^{p_i} \frac{z/z_{i\ell}}{1- z/z_{i\ell}}  \right\}\\
  =&
   n+j+1 - \sum_{i=1}^m \frac{p_i \beta_i}{2}  +
   \sum_{i=1}^m \beta_i \sum_{\ell=1}^{p_i} \text{\rm Re} \left\{ \frac{1+ z/z_{i\ell}}{1- z/z_{i\ell}} \right\}\\
  \geq&
   j+1 + \sum_{i=1}^m \beta_i \sum_{\ell=1}^{p_i}
    \text{\rm Re} \left\{\frac{1+ z/z_{i\ell}}{1- z/z_{i\ell}} \right\} \geq 0 .
\end{align*}
\end{proof}

\begin{proposition}\label{prop:boundary_uniqueness}
Let $c  =(c_0,\ldots ,c_n) \in {\rm Int} \, {\mathcal C}(n)$ and $z_0 \in {\mathbb D} \backslash \{ 0 \}$.
Then
$$
  Q_{\gamma, j}(z_0, \varepsilon ) \in \partial V_\Omega^j(z_0, c )
$$
holds for all $\varepsilon \in \partial {\mathbb D}$, where $\gamma$ is the Schur parameter of $c $.
Furthermore,
$$
 \int_0^{z_0} \zeta^j \{g(\zeta)-g(0) \} \, d \zeta = Q_{\gamma, j}(z_0, \varepsilon )
$$
for some $g \in {\mathcal F}_\Omega (c )$, and $\varepsilon \in \partial {\mathbb D}$
if, and only if, $g(z) \equiv P( \omega_{\gamma , \varepsilon } (z))$.
\end{proposition}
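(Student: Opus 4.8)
The plan is to exploit the fact, already established in Corollary 4.2, that $V_\Omega^j(z_0,c)$ is a compact convex subset of $\mathbb C$, so that its boundary points are exactly the points where some nonconstant $\mathbb R$-linear functional $L(w) = \mathrm{Re}(e^{-i\theta}w)$ attains its maximum over $V_\Omega^j(z_0,c)$. For a fixed $\varepsilon \in \partial\mathbb D$, I would therefore seek a direction $\theta$ such that the competitor $g_\varepsilon := P\circ\omega_{\gamma,\varepsilon}$ is the unique maximizer of $g\mapsto \mathrm{Re}\,\big(e^{-i\theta}\int_0^{z_0}\zeta^j(g(\zeta)-g(0))\,d\zeta\big)$ over $\mathcal F_\Omega(c)$. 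By Lemma 2.? (Lemma~\ref{lemma:Schur_polynomial_and_Caratheodoy_problem}) every $g\in\mathcal F_\Omega(c)$ has the form $g(z)=P(\omega(z))$ with $\omega$ given by the Schur representation \eqref{eq:SChur_representation} in terms of a free parameter $\omega^*\in H_1^\infty(\mathbb D)$, and $g_\varepsilon$ corresponds to the constant choice $\omega^*\equiv\varepsilon$. So the optimization is really over $\omega^*\in H_1^\infty(\mathbb D)$, and I expect the key inequality to take the shape
\[
\mathrm{Re}\!\left(e^{-i\theta}\!\int_0^{z_0}\!\zeta^j\{P(\omega(\zeta))-P(\omega_{\gamma,\varepsilon}(\zeta))\}\,d\zeta\right)\le 0
\]
for all admissible $\omega$, with equality forcing $\omega=\omega_{\gamma,\varepsilon}$.

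To produce such an inequality I would first reduce, exactly as in the proof of Proposition~\ref{prop:interior_point}, to the case where $P'$ is a finite product $P'(0)\prod_{i=1}^m(1-\eta_i z)^{-\beta_i}$ with $\eta_i\in\partial\mathbb D$, $0<\beta_i\le 2$, $\sum\beta_i=2$, using Lemma~\ref{lemma:approx_by_polygon_mapping} and a limiting argument (here one must check that $\partial V_\Omega^j$ behaves well under the approximation — a point to be careful about, though convexity makes it manageable). The natural next move is integration by parts: since $\omega(\zeta)-\omega_{\gamma,\varepsilon}(\zeta)=\mathcal O(\zeta^{n+1})$, writing $P(\omega(\zeta))-P(\omega_{\gamma,\varepsilon}(\zeta))$ as an integral of $P'$ along the segment between the two $\omega$-values and then integrating the $\zeta^j$ factor, one should be able to express the functional in terms of $P'(c_0)$ and of $\omega^*(z)-\varepsilon$ multiplied by an explicit kernel built from the Schur polynomials $A_n,B_n,\widetilde A_n,\widetilde B_n$ — the same kernel $\zeta^{n+j+1}/\big(\prod_i(B_n(\zeta)-\eta_i\widetilde B_n(\zeta))^{\beta_i}\big)$ that appeared in $\psi$. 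I would choose $e^{-i\theta}$ to make the leading term real and negative, i.e. tied to $\arg(z_0^{n+j+2}P'(c_0)\,\psi'$-type quantity$)$, matching the choice implicit in Proposition~\ref{prop:interior_point}.

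Once the functional is in this normalized form, the remaining task is a Schwarz-lemma / subordination estimate: one has to show that replacing $\omega^*$ by the constant $\varepsilon\in\partial\mathbb D$ maximizes a quantity of the form $\mathrm{Re}\int_0^{z_0}(\text{positive-type kernel})\cdot(\text{something governed by }\|\omega^*\|_\infty\le 1)\,d\zeta$. Concretely I expect the bound to follow from the fact that $|\omega^*(\zeta)|\le 1$ forces the relevant Möbius-type expression to lie in a half-plane determined by the boundary value $\varepsilon$, combined with the convexity/starlikeness established for the kernel in Proposition~\ref{prop:interior_point} (which is precisely what guarantees the kernel integrates to something with the right winding behavior). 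The \textbf{main obstacle} I anticipate is the uniqueness clause: showing that equality in the final estimate propagates back through the integral, the integration by parts, and the Schur representation to force $\omega^*\equiv\varepsilon$ pointwise, hence $g\equiv P(\omega_{\gamma,\varepsilon})$. This should come from the fact that equality in a Schwarz-type inequality on a set of positive measure (the arc of integration, or rather the radial segment to $z_0$) forces the extremal function to be the Blaschke product, using that $\omega_{\gamma,\varepsilon}$ is a finite Blaschke product when $|\varepsilon|=1$ (as noted after \eqref{def:extremal_Q}); the analyticity of everything in $\zeta$ then upgrades "equality a.e. on a segment" to "identically equal."
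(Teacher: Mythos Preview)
Your high-level plan --- find, for each $\varepsilon\in\partial\mathbb D$, a direction $\theta$ so that $Q_{\gamma,j}(z_0,\varepsilon)$ is the unique maximizer of $\mathrm{Re}(e^{-i\theta}\,\cdot\,)$ over $V_\Omega^j(z_0,c)$ --- is exactly right, and the paper proceeds this way. But the execution you sketch has a real gap.

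The step where you propose to ``write $P(\omega(\zeta))-P(\omega_{\gamma,\varepsilon}(\zeta))$ as an integral of $P'$ along the segment'' and then ``express the functional in terms of $\omega^*(z)-\varepsilon$ multiplied by an explicit kernel'' is a linearization. That is precisely what suffices for Proposition~\ref{prop:interior_point}, where only $\partial Q_{\gamma,j}/\partial\varepsilon$ at $\varepsilon=0$ is needed. For the present proposition you need the \emph{exact nonlinear} extremality, and the linearized kernel argument does not deliver a one-sided inequality valid for all $\omega^*\in H_1^\infty(\mathbb D)$, only a first-order expansion. The paper's mechanism is different and more geometric: for each fixed $z\in\mathbb D\setminus\{0\}$ one has $\omega(z)\in\overline{\mathbb D}(\rho(z),r(z))$ with $\omega_{\gamma,\varepsilon}(z)$ on the boundary circle, and convexity of $P$ gives a \emph{pointwise} half-plane inequality
\[
\mathrm{Re}\,\frac{P(\omega_{\gamma,\varepsilon}(z))-g(z)}{(\omega_{\gamma,\varepsilon}(z)-\rho(z))\,P'(\omega_{\gamma,\varepsilon}(z))}\ge 0,
\]
coming from the tangent line to $P(\partial\mathbb D(\rho(z),r(z)))$ at $P(\omega_{\gamma,\varepsilon}(z))$. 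After computing $\omega_{\gamma,\varepsilon}(z)-\rho(z)$ via the Schur polynomials, this inequality takes the form $\mathrm{Re}\{[z^j(P(\omega_{\gamma,\varepsilon})-P(c_0))-z^j(g-g(0))]/(\varepsilon h'(z))\}\ge 0$ for a function $h(z)=\int_0^z \zeta^{n+j+1}P'(\omega_{\gamma,\varepsilon}(\zeta))(B_n(\zeta)+\varepsilon\zeta A_n(\zeta))^{-2}\,d\zeta$. Note the denominator involves $B_n+\varepsilon z A_n$, not $B_n$ alone as in $\psi$. One then shows $\mathrm{Re}(1+zh''/h')>0$ (this is where the polygon approximation is used, and only here --- so your worry about $\partial V_\Omega^j$ under approximation is misplaced), hence $h=a h_0^{n+j+2}$ with $h_0\in\mathcal S^*$. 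The crucial trick you are missing is the choice of integration path: one integrates not along the segment $[0,z_0]$ but along the curve $\Gamma:\,z(t)=h_0^{-1}(t^{1/(n+j+2)}h_0(z_0))$, on which $h'(z(t))z'(t)\equiv h(z_0)$ is constant. This converts the pointwise inequality into the desired half-plane inclusion $V_\Omega^j(z_0,c)\subset\{w:\mathrm{Re}[(Q_{\gamma,j}(z_0,\varepsilon)-w)/(\varepsilon h(z_0))]\ge 0\}$, and uniqueness follows by tracing equality back along $\Gamma$ to force $g=P\circ\omega_{\gamma,\varepsilon}$ on $\Gamma$, hence everywhere by the identity theorem.
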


\begin{proof}
Let $g \in {\mathcal F}_\Omega (c )$ and $\omega = P^{-1} \circ g $.
Then by Lemma  \ref{lemma:Schur_polynomial_and_Caratheodoy_problem},
there exists $\omega^* \in H_1^\infty ({\mathbb D})$ such that
$$
  (P^{-1} \circ g)(z)  =  \omega (z)  =
  \frac{z \widetilde{A}_n(z) \omega^* (z) + \widetilde{B}_n(z)}{zA_n(z)\omega^* (z)+B_n(z)} ,
$$
where $A_n(z)$, $B_n(z)$, $\widetilde{A}_n(z)$ and $\widetilde{B}_n(z)$ are the Schur polynomials associated with $\gamma$.
Since $\omega^* \in H_1^\infty ({\mathbb D})$, we have
\begin{equation}\label{eq:p001}
|z \omega^* (z)| =
    \left|  \frac{B_n(z)\omega (z) - \widetilde{B}_n(z) }{\widetilde{A}_n(z) - A_n(z) \omega (z)}  \right|
    \leq  |z| ,\quad z \in {\mathbb D}.
\end{equation}

\begin{figure}[b]
\begin{tikzpicture}[scale=0.8]
\draw[very thick](3,3) circle (3); %
\node at (4.7,6) {{\Large ${\mathbb D}$}};
\draw[very thick] (4,4) circle (1);
\node at (3.9,2.7) {{${\mathbb D}(\rho(z ),r(z ))$}};
\draw[fill] (3.2,4.6) circle (1pt);
\node at (2.8,4.7) {$z ^*$};
\draw[very thick] plot [smooth]
coordinates { (13,8.3)(11,6.3)(9,2)(12,0.3)(14.5,0.2) };
\node at (13,7) {{\Large $\Omega$}};
\draw[->] plot [smooth]
coordinates {(6,4.5)(8,5)(9.5,5)};
\node at (8,5.5){$P$};
\draw[->] plot [smooth]
coordinates {(6.2,2.5)(8,2.3)(9.5,3)};
\node at (8,1.8) {$g$};
\draw[->] plot [smooth cycle]
coordinates {(13.5,5)(11,4)(10.5,2)(13.2,3.5)};
\node at (13,5.7) {$P({\mathbb D}(\rho(z ),r(z )))$};
\draw[fill] (11,3) circle (1pt);
\node at (11,2.5) {$g(z )$};
\draw[fill] (13.2,3.5) circle (1pt);
\node at (13.5,3.4) {$w^*$} ;
\draw[->] plot [smooth]
coordinates {(13.2,3.5)(14,4.5)};
\node at (14.2,4.5) {$v$};
\draw[dashed,->] (11,3)--(13.2,3.5);
\node at (3,-1) {$\left\{
  \begin{array}{l}
   z ^* = \rho(z )+r(z )e^{i\theta} \\
   w^* = P(z ^*) \\
  v = i r(z )e^{i\theta}P'(z ^*) \\
  \end{array}
\right.$} ;
\end{tikzpicture}
\caption{}\label{fig-1}
\end{figure}

It follows from Lemmas \ref{lemma:determinat_of_Mobius_transformation} and \ref{lemma:denominator_has_no_zeros}
that the inequality (\ref{eq:p001}) is equivalent to
$$
| \omega (z) - \rho (z) | \leq r (z) ,
$$
where
\begin{align*}
\rho (z) &=
   \frac{\overline{B_n(z)} \widetilde{B}_n(z) -|z|^2 \overline{A_n(z)} \widetilde{A}_n(z)}{ |B_n(z)|^2 -|z|^2 | A_n(z)|^2 } ,\\[2mm]
r(z) &=
    \frac{|z||B_n(z) \widetilde{A}_n(z) - A_n(z) \widetilde{B}_n(z)|}{ |B_n(z)|^2 -|z|^2 | A_n(z)|^2 }\\[2mm]
    &=
     \frac{|z|^{n+1} \prod_{k=0}^n (1-|\gamma_k|^2) }{ |B_n(z)|^2 -|z|^2 | A_n(z)|^2 }.
\end{align*}

Thus  $(P^{-1} \circ g)(z) = \omega (z) \in \overline{\mathbb D} ( \rho(z), r(z))$,
and so $g(z) \in P(\overline{\mathbb D} ( \rho(z), r(z)) )$ for any $g \in {\mathcal F}_\Omega (c )$.
Since a convex univalent function maps any closed subdisk of ${\mathbb D}$ onto a convex closed Jordan domain
with an  analytic convex boundary curve, for any $z \in {\mathbb D} \backslash \{ 0 \} $ and
$\theta \in {\mathbb R}$, $g(z)$ belongs to the left half plane
of the tangential line at $P(\rho(z)+r(z)e^{i \theta })$ with the tangential vector
$ir(z)e^{i \theta} P'(\rho(z)+r(z)e^{i \theta })$ (see Figure \ref{fig-1}). Thus
\begin{equation}\label{ineq:supporting_half_plane}
{\rm Re \,}
  \left\{  \frac{P(\rho(z)+r(z)e^{i \theta}) - g(z)}{r(z)e^{i \theta} P'(\rho(z)+r(z)e^{i \theta})} \right\} \geq  0  .
\end{equation}
Since the tangential line intersects the boundary curve only at $P(\rho(z)+r(z)e^{i \theta})$,
equality in (\ref{ineq:supporting_half_plane}) holds if, and only if, $g(z) = P(\rho (z)+ r(z)e^{i \theta})$.

\bigskip

If $g(z) = (P \circ \omega_{\gamma, \varepsilon })(z)$ with $| \varepsilon | =1$,
 Lemma \ref{lemma:Schur_polynomial_and_Caratheodoy_problem} shows that
$$
\omega (z) = \omega_{\gamma, \varepsilon}(z)
    = \frac{\varepsilon z \widetilde{A}_n(z)  + \widetilde{B}_n(z)}{\varepsilon zA_n(z)  + B_n (z) } .
$$
Hence by Lemma \ref{lemma:determinat_of_Mobius_transformation}
\begin{align}
&
\omega_{\gamma, \varepsilon}(z) - \rho(z)
\label{eq:tilde_omega-center}
\\
=&
\frac{\varepsilon z \widetilde{A}_n(z)  + \widetilde{B}_n(z)}{\varepsilon zA_n(z) + B_n (z) }
    - \frac{\overline{B_n(z)} \widetilde{B}_n(z)-|z|^2 \overline{A_n(z)} \widetilde{A}_n(z)}{ |B_n(z)|^2 -|z|^2 | A_n(z)|^2 }
\nonumber
\\[2mm]
=&
   \frac{\varepsilon z (B_n(z) \widetilde{A}_n(z) - A_n(z) \widetilde{B}_n(z) )}{ |B_n(z)|^2 -|z|^2 | A_n(z)|^2 }
   \cdot \frac{\; \overline{B_n(z)+ \varepsilon zA_n(z) } \; }{B_n(z)+ \varepsilon z A_n(z)}
\nonumber
\\[2mm]
=&
   \frac{\prod_{k=0}^n (1-|\gamma_k|^2) |B_n(z)+ \varepsilon zA_n(z)|^2 }{ |B_n(z)|^2 -|z|^2 | A_n(z)|^2 }
   \cdot \frac{\varepsilon z^{n+1}}{(B_n(z)+\varepsilon zA_n(z) )^2} .
\nonumber
\end{align}
In particular, $| \omega_{\gamma , \varepsilon }(z) - \rho(z)| = r(z)$.
Thus for any $z \in {\mathbb D} \backslash \{ 0\}$, there exists $\theta \in {\mathbb R}$ such that
$\omega_{\gamma , \varepsilon }(z) = \rho(z) + r(z)e^{i \theta }$.
Substituting this into (\ref{ineq:supporting_half_plane}) we have
\begin{equation}\label{ineq:fundamental}
   {\rm Re \,}
  \left\{
  \frac{P(\omega_{\gamma , \varepsilon }(z))  - g(z)}
   {(\omega_{\gamma , \varepsilon }(z) - \rho (z))
   P'(\omega_{\gamma , \varepsilon }(z))}
   \right\}
   \geq  0,
\end{equation}
with equality if, and only if, $g(z) = P(\omega_{\gamma , \varepsilon }(z))$.
Rewriting  (\ref{ineq:fundamental}) and using (\ref{eq:tilde_omega-center})
we obtain
$$
{\rm Re} \,
 \left\{
  \frac{z^j( P(\omega_{\gamma , \varepsilon }(z)) -P(c_0)) - z^j(g(z)-g(0))}
  {
  \prod_{k=0}^n (1-|\gamma_k|^2)
  \displaystyle
  \frac{|B_n(z)+ \varepsilon zA_n(z)|^2}{|B_n(z)|^2 -|z|^2 | A_n(z)|^2}
  \displaystyle
  \frac{\varepsilon z^{n+j+1} P'(\omega_{\gamma , \varepsilon }(z) ) }{(B_n(z)+ \varepsilon zA_n(z) )^2}
  }
  \right\}
  \geq  0
$$
for $z \in {\mathbb D} \backslash \{ 0 \}$, and so by Lemma \ref{lemma:denominator_has_no_zeros} we have
\begin{equation}\label{ineq:key_ineq}
{\rm Re \,}
 \left\{  \frac{z^j( P(\omega_{\gamma , \varepsilon }(z)) -P(c_0)) - z^j(g(z)-g(0))}{\varepsilon h'(z)}  \right\}
  \geq 0
\end{equation}
for $z \in {\mathbb D}\backslash \{ 0 \}$, where
\begin{equation*}\label{eq:definition_of_k}
h(z) =
    \int_0^z \frac{\zeta^{n+j+1} P'(\omega_{\gamma, \varepsilon }(\zeta )) }{(B_n( \zeta )+ \varepsilon \zeta A_n( \zeta ) )^2} \, d \zeta .
\end{equation*}

We now claim that
\begin{equation}\label{ineq:convexity_of_k}
 {\rm Re \,} \Big(1+ \frac{z h''(z)}{h'(z)} \Big) > 0,  \quad\mbox{for}\quad z \in {\mathbb D}.
\end{equation}
Assuming the inequality (\ref{ineq:convexity_of_k}) for the moment, we complete the proof.
\bigskip

Notice that $h$ has  a Taylor series representation of the form $h(z)=az^{n+j+2}+\cdots$,
with $a=(n+j+2)^{-1} P'(c_0)$. By Lemma \ref{lemma:p-valent_convexity},
there exists $h_0 \in \mathcal{S}^*$ such that $h(z) = a h_0(z)^{n+j+2}$.
Since $h_0$ is starlike, for any $z_0\in\mathbb{D}\backslash \{0\}$, the line segment joining
$0$ and $h_0(z_0)$ lies entirely in $h_0(\mathbb{D})$.
Define a curve $\Gamma : z=z(t)$, $0 \leq t \leq 1$ joining $0$ to $z_0$ by
$z(t) = h_0^{-1}(t^{1/(n+j+2)} h_0(z_0))$. Then
$$
   h(z(t))
   =
   \frac{P'(c_0)}{n+j+2}
   h_0( h_0^{-1} (t^{1/(n+j+2)} h_0(z_0) ) )^{n+j+2}
   =
   t h(z_0) .
$$
Thus $h'(z(t)) z'(t) \equiv h(z_0)$ for $0 < t \leq 1$. This, and (\ref{ineq:key_ineq}), shows that
\begin{align}
  0 \leq&
   \int_0^1 {\rm Re \,} \
   \left[
   \frac{\{ z(t)^j( P(\omega_{\gamma, \varepsilon }(z(t)))-P(c_0)) - z(t)^j (g(z(t))-g(0) ) \} z'(t) }{\varepsilon h'(z(t))z'(t)}
   \right] \, dt
\label{ineq:integration_on _Gamma}
\\[2mm]
  = \ &
   {\rm Re \,}
   \left[
  \frac{ \int_0^1   z(t)^j   \{ P(\omega_{\gamma, \varepsilon}(z(t)))-P(c_0) \} z'(t) \, dt
   - \int_0^1   z(t)^j   \{ g(z(t)) - g(0) \}z'(t) \, dt} {\varepsilon h(z_0)}
   \right]
\nonumber
\\[2mm]
  =\ &
   {\rm Re \,}
   \left[
  \frac{ Q_{\gamma , j}(z_0, \varepsilon ) - \int_0^{z_0} z^j \{ g(z) - g(0) \} \, dz} {\varepsilon h(z_0)}
   \right] .
\nonumber
\end{align}
This implies that for any $g \in {\mathcal F}_\Omega (c )$, the value of the integral
$\int_0^{z_0} z^j \{ g(z) - g(0) \} \, dz$ belongs to a closed half plane, i.e.,
$$
\int_0^{z_0}  z^j \{ g(z) - g(0) \} \, dz \in {\mathbb H}(w_0, \alpha )
   := \{ w \in {\mathbb C} : {\rm Re \,} \{(w_0 - w ) /\alpha \} \geq 0 \},
$$
where $w_0 = Q_{\gamma,j}(z_0,\varepsilon)$ and $\alpha = \varepsilon h(z_0 ) $. Thus
\begin{equation}\label{eq:V_is_contained_in_H}
V_\Omega^j(z_0,c ) \subset {\mathbb H}(w_0, \alpha ) .
\end{equation}
Since $P\circ \omega_{\gamma,\varepsilon} \in {\mathcal F}_\Omega (c )$, it follows that
\begin{equation}\label{eq:w0_is_in_boundary_of_H}
w_0 = Q_{\gamma,j}(z_0,\varepsilon)
\in \partial {\mathbb H}(w_0, \alpha )
\cap V_\Omega^j(z_0,c ),
\end{equation}
and so  from (\ref{eq:V_is_contained_in_H}) and (\ref{eq:w0_is_in_boundary_of_H}) we obtain
$Q_{\gamma,j}(z_0,\varepsilon)  = w_0 \in \partial V_\Omega^j(z_0,c )$.

\bigskip
We next deal with uniqueness. Suppose that
$$
\int_0^{z_0}z^j \{ g(z) -g(0) \} \, dz
= Q_{\gamma,j}(z_0,\varepsilon) = \int_0^{z_0} \zeta^j \{ P(\omega_{\gamma,\varepsilon}(\zeta )) -P(c_0) \} \, d \zeta
$$
holds for some $g \in {\mathcal F}_\Omega (c )$ and $\varepsilon \in \partial {\mathbb D}$.
Then from (\ref{ineq:integration_on _Gamma}) it follows that
$$
   {\rm Re \,}
 \Big(
  \frac{z^j( P(\omega_{\gamma,\varepsilon}(z)) -P(c_0))  -  z^j(g(z)-g(0))}  {\varepsilon h'(z)}
  \Big)
  = 0
$$
holds on the curve $\Gamma$, and so
$$
     {\rm Re \,}
  \Big(
  \frac{P(\omega_{\gamma,\varepsilon}(z))  - g(z)} {(\omega_{\gamma,\varepsilon}(z) - \rho (z)) P'(\omega_{\gamma,\varepsilon}(z) )}
   \Big)
  = 0
$$
on $\Gamma$. By the equality condition of (\ref{ineq:fundamental}), we have $g(z) = P (\omega_{\gamma,\varepsilon}(z) )$ on $\Gamma$,
and from the identity theorem for analytic functions it follows that $g= P \circ \omega_{\gamma,\varepsilon}$.

\bigskip
We finally prove  (\ref{ineq:convexity_of_k}).
\bigskip

Since the harmonic function
${\rm Re} ( z h''(z)/h'(z)) +1 $ assumes the value $n+j+2 \geq 1 > 0 $ at the origin,
by using the minimum principle it suffices to show $\text{\rm Re} ( z h''(z)/h'(z)) +1 \geq 0$
in ${\mathbb D}$.
\bigskip

As in the proof of Proposition \ref{prop:interior_point},
we may assume without loss of generality that
$$
P'(z) = \frac{P'(0)}{ \prod_{i=1}^m (1- \eta_i z )^{\beta_i }},
$$
where $\eta_i \in \partial {\mathbb D}$, $0< \beta_i \leq 2$ $(i=1,2, \ldots , m)$, and $\sum_{i=1}^m \beta_i = 2$.
Thus from Lemma \ref{lemma:Schur_polynomial_and_Caratheodoy_problem} we have
$$
   h(z) = \int_0^z \frac{P'(0) \zeta^{n+j+1}}
   {\prod_{i=1}^m \{ B_n(\zeta ) + \varepsilon \zeta A_n(\zeta) - \eta_i( \widetilde{B}_n( \zeta )
    + \varepsilon \zeta \widetilde{A}_n (\zeta)) \}^{\beta_i}}\, d \zeta
$$
and hence a simple computation gives
$$
    z \frac{h''(z)}{h'(z)}
     =
     n+j+1
     -
     \sum_{i=1}^m
     \beta_i z
     \frac{d}{dz}
     \log \{
     B_n(z ) + \varepsilon z A_n( z )
   - \eta_i
   ( \widetilde{B}_n( z )
     + \varepsilon z \widetilde{A}_n (z )) \} .
$$
We note that for each $|\varepsilon|=1$, the function $\omega_{\gamma , \varepsilon} (z) = \{\widetilde{B}_n( z ) + \varepsilon z \widetilde{A}_n ( z ) \} / \{B_n(z ) + \varepsilon z A_n( z ) \}$ is a finite Blaschke product. Thus for each $i$, the polynomials $q_i(z):=B_n(z ) + \varepsilon z A_n( z ) - \eta_i \{ \widetilde{B}_n( z ) + \varepsilon z \widetilde{A}_n ( z )\}$ have no zeros in ${\mathbb D}$, and are of degree  $n+1$ at most.
Since $B_n(0)=1$ and $\widetilde{B}_n(0) = \gamma_0$,  the polynomials $q_i(z)$ can be expressed as
$$
q_i(z)= (1- \eta_i\gamma_0) \prod_{\ell=1}^{p_i} \left( 1 - \frac{z}{z_{i \ell }} \right) ,
$$
where $z_{i1}, \ldots , z_{i p_i} \in {\mathbb C} \backslash {\mathbb D}$
are the zeros of the polynomial $q_i(z)$, and $0 \leq p_i \leq n+1$. Therefore  using the identity
$w/(1-w)= 2^{-1}\{(1+w)/(1-w)-1 \}$ we have
\begin{align*}
   {\rm Re \,} \left\{ z \frac{h''(z)}{h'(z)} \right\}
    =&
    n+j+1 + \sum_{i=1}^m \beta_i \sum_{\ell=1}^{p_i}
    {\rm Re \,}
    \left\{\frac{z/z_{i \ell}}{1- (z/z_{i \ell})} \right\}
\\
    =&
    n+j+1
    - \sum_{i=1}^m \frac{\beta_i p_i}{2}
    + \sum_{i=1}^m
    \frac{\beta_i}{2} \sum_{\ell=1}^{p_i}
    {\rm Re \,}
    \left\{ \frac{1+ (z/z_{i \ell})}{1- (z/z_{i \ell})} \right\}
\\
    \geq&
    j +
    \sum_{i=1}^m \frac{\beta_i}{2} \sum_{\ell=1}^{p_i}
    {\rm Re \,}
    \left\{ \frac{1+ (z/z_{i \ell})}{1- (z/z_{i \ell})} \right\}
\\
  \geq& j \geq -1 .
\end{align*}
\end{proof}

Now we are in a position to prove our main result.

\begin{proof}[\textbf{Proof of Theorem \ref{thm:Main_theorem}}]
Let $c  \in \mbox{\rm Int} \, {\mathcal C}(n)$, $z_0 \in {\mathbb D} \backslash \{ 0 \}$
and $\gamma = (\gamma_0, \ldots , \gamma_n)$ be the Schur parameter of $c $.
Corollary \ref{cor:compact_and_convex} and Proposition \ref{prop:interior_point},
shows that the set $V_\Omega^j(z_0, c )$ is a compact and convex subset of ${\mathbb C}$,
and $Q_{\gamma,j}(z_0,0)$ is an interior point of $V_\Omega^j(z_0, c )$.
From these properties it is not difficult to see that $\partial V_\Omega^j(z_0, c )$
is a Jordan curve, and $V_\Omega^j(z_0, c )$ is a union of $\partial V_\Omega^j ( z_0 , c )$
and its inside domain, i.e., $V_\Omega^j(z_0, c )$ is a closed Jordan domain.

\bigskip
Proposition \ref{prop:boundary_uniqueness} shows that the map
$\partial {\mathbb D} \ni \varepsilon \mapsto Q_{\gamma,j}(z_0,\varepsilon ) \in \partial V_\Omega^j(z_0, c)$
is a closed curve. Furthermore, it is a simple curve. Indeed, if
$Q_{\gamma,j}(z_0,\varepsilon_1 )  = Q_{\gamma,j}(z_0,\varepsilon_2 )$
for some $\varepsilon_1, \varepsilon_2 \in \partial {\mathbb D}$,
then by the uniqueness part of Proposition \ref{prop:boundary_uniqueness} we have
$P ( \omega_{\gamma,\varepsilon_1} (z)) \equiv P ( \omega_{\gamma, \varepsilon_2} (z))$,
and so $\omega_{\gamma,\varepsilon_1} (z) \equiv \omega_{\gamma,\varepsilon_2} (z)$.
Using the representation (\ref{eq:extremal_omega_and_schur_polynomials}) for
$\omega_{\gamma,\varepsilon}$, we have
$$
\frac{ \varepsilon_1 z\widetilde{A}_n(z)+ \widetilde{B}_n(z)}{ \varepsilon_1 z A_n(z) + B_n(z)}
=\frac{ \varepsilon_2 z\widetilde{A}_n(z)+ \widetilde{B}_n(z)}{ \varepsilon_2 z A_n(z) + B_n(z)}
$$
which gives
$$
\varepsilon_1 z (\widetilde{A}_n(z)B_n(z)-A_n(z)\widetilde{B}_n(z))
=\varepsilon_2 z (\widetilde{A}_n(z)B_n(z)-A_n(z)\widetilde{B}_n(z)).
$$
Consequently, by Lemma \ref{lemma:determinat_of_Mobius_transformation} we conclude that $\varepsilon_1 = \varepsilon_2$.

\bigskip
Since a simple closed curve cannot contain any simple closed curve other than itself, the map
$\partial {\mathbb D} \ni \varepsilon \mapsto Q_{\gamma,j}(z_0,\varepsilon ) \in \partial V_\Omega^j(z_0, c)$
is surjective, and a parametrization of the boundary curve $\partial V_\Omega^j(z_0, c )$.
It therefore follows from Darboux's theorem (see \cite[Lemma 1.1]{Pommerenke-book})
that for fixed $z_0 \in {\mathbb D} \backslash \{ 0 \}$, $Q_{\gamma,j}(z_0,\varepsilon )$ is a convex univalent analytic
function of $\varepsilon \in \overline{\mathbb D}$, and
$V_\Omega^j(z_0, c ) = \{Q_{\gamma,j}(z_0,\varepsilon ) : \varepsilon \in \overline{\mathbb D} \}$.
This completes the proof.
\end{proof}


\end{document}